\newtheorem{theorem}{Theorem}[section]
\newtheorem{lemma}{Lemma}[section]
\theoremstyle{definition}
\theoremstyle{remark}
\numberwithin{equation}{section}
\subjclass[2020]{Primary  34D09,  34K40; Secondary  34K05, 34K06. }
\keywords{Exponential dichotomy, sequential uniformity, neutral equation, projection series, orthogonal system.}
\date{\today}
\begin{document}

\title[Sequential dichotomies and  uniformities]
{Sequential dichotomies and  uniformities for\\ a  neutral equation}

\maketitle

\centerline{\scshape Shuang Chen}
\medskip
{\footnotesize
\centerline{School of Mathematics and Statistics, Central China Normal University}
 \centerline{ Wuhan, Hubei 430079, China}
   \centerline{{\rm{Email}: schen@ccnu.edu.cn}}
}

\medskip

\centerline{\scshape Weinian Zhang }

{\footnotesize
 \centerline{School of Mathematics, Sichuan University}
   \centerline{Chengdu, Sichuan 610064, China}
    \centerline{{\rm{Email}: matzwn@163.com}}
}

\medskip

\begin{abstract}
Sequential dichotomies of general delay equations are not uniform, which was proved two decades ago.
This however reminds whether the countably infinite many dichotomies of a neutral equation have the sequential uniformity.
In this paper,
considering a scalar neutral equation, we give a negative answer
and prove that the series of the projections of dichotomies is divergent.

\end{abstract}

\parskip 0.3cm

\section{Introduction and main results}

Exponential dichotomy describes the hyperbolicity  of dynamical systems generated
by homogeneous linear differential equations or maps.
It formulates the decomposition of the phase space associated with a dynamical system
into a (weak) stable invariant subspace and a (weak) unstable one (\cite{Chen-Shen-20,Coppel-78,Zhang-93}).
More precisely,
on a Banach space $\mathcal{B}$,
the $C^{0}$ semigroup $\{T(t):t\geq 0\}$ is said to admit an {\it exponential dichotomy}
(see \cite{Henry-81}) on an interval $\mathbb{R}^{+}:=[0,+\infty)$
if there exist projections  $\Pi_{-}$ and $\Pi_{+}:=I-\Pi_{-}$ on $\mathcal{B}$, where $I$ is the identity,
and real constants $\alpha$, $\beta$ with $\alpha>\beta$ and $K\geq 1$  such that
\begin{enumerate}
\item[{\bf (D1)}] $T(t)\Pi_{\pm}=\Pi_{\pm}T(t)$ for $t\geq 0$;
\vskip 3pt

\item[{\bf (D2)}] $T(t)|_{\mathcal{R}(\Pi_{+})}$ is an isomorphism from the range $\mathcal{R}(\Pi_{+})$ of $\Pi_{+}$ onto itself,
which defines the inverse of $T(t)|_{\mathcal{R}(\Pi_{+})}$, denoted by $T(-t):\mathcal{R}(\Pi_{+}) \to \mathcal{R}(\Pi_{+})$;
\vskip 3pt

\item[{\bf (D3)}] $\|T(t)\Pi_{-}\|\leq Ke^{\beta t}$ for $t\geq 0$,
and $\|T(-t)\Pi_{+}\|\leq Ke^{-\alpha t}$ for $t\geq 0$.
\end{enumerate}
We call $\alpha$, $\beta$ and $K$ an {\it upper exponent}, a {\it lower exponent}, and a {\it bound} of this dichotomy, respectively.
For notational convenience,
we use $\mathcal{E}(\beta, \alpha; K)$
to denote this dichotomy with exponents $\alpha,\beta$ and bound $K$,
and refer to $\Pi_{+}$ and $\Pi_{-}$ as the {\it unstable (dichotomy) projection} and the {\it stable (dichotomy) projection} respectively.

Unlike any linear autonomous system of ODEs (abbr. ordinary differential equations),
which only admits finitely many exponential dichotomies since each matrix has finitely many eigenvalues,
an infinite-dimensional linear evolutionary system
has a generic property that
the corresponding semigroup, generated by the solution operator,
can admit infinitely many exponential dichotomies.
For example, the scalar delay equation
\begin{eqnarray}\label{RDDE}
\dot x(t)= x(t-1),\ \ \ \ x\in\mathbb{R},
\end{eqnarray}
has the characteristic equation $\lambda-e^{-\lambda}=0$, from which we can compute its eigenvalues (see \cite{JKHale-Verduyn})
$$
\lambda_n=-\ln(2n\pi+\frac{\pi}{2})+ {\bf i}\left(2n\pi+\frac{\pi}{2}-\frac{\ln(2n\pi+\frac{\pi}{2})}{2n\pi+\frac{\pi}{2}}\right)+\Re(n)
$$
and their conjugacies $\bar\lambda_n$, $n=1,2,...$,
where ${\bf i}=\sqrt{-1}$ and $\Re(n)$ is the remainder with
the real parts and the imaginary parts
$$
{\rm Re}\Re(n)=O((\frac{\ln(2n\pi+\frac{\pi}{2})}{2n\pi+\frac{\pi}{2}})^2),
~~~
{\rm Im}\Re(n)=O((\frac{\ln(2n\pi+\frac{\pi}{2})}{2n\pi+\frac{\pi}{2}})^3).
$$
One can check that ${\rm Re}\lambda_{n+1}<{\rm Re}\lambda_{n}$ for sufficiently large $n$, implying that
equation \eqref{RDDE} has a sequence of spectral gaps $({\rm Re}\lambda_{n+1},{\rm Re}\lambda_{n})$,
an open interval of $\mathbb{R}$ covering no real parts of the eigenvalues of Eq.\eqref{RDDE},
and therefore a sequence of dichotomies $\mathcal{E}(\beta_{n}, \alpha_{n}; K_{n})$ with projections $P_{n}^{\pm}$,
where the pairs of exponents $\alpha_{n}$ and $\beta_{n}$ satisfy
${\rm Re}\lambda_{n}>\alpha_{n}>\beta_{n}>{\rm Re}\lambda_{n+1}$ for all sufficiently large $n>0$,
each bound $K_{n}$ can be expressed in terms of $\alpha_{n}$ and $\beta_{n}$ as shown in \cite[Lemma 6.2, p.213]{JKHale-Verduyn},
and each $P_{n}^{+}$  is the sum of spectral projections of the eigenvalues satisfying ${\rm Re}\lambda>\alpha_{n}$.
We say that equation (\ref{RDDE}) has {\it sequential dichotomies}.

For the sequential dichotomies, there are two basic questions:
\begin{enumerate}
\item[{\bf (Q1)}]
For every $\phi\in C[-1,0]$,
the Banach space of continuous functions on $[-1,0]$
endowed with the supremum norm $\|\cdot\|_{\infty}$,
do the dichotomy projections $P_{n}^{+}$ satisfy
$\|P_{n}^{+}\phi-\phi\|_{\infty}\to 0$ as $n\to +\infty$?

\vskip 3pt
\item[{\bf (Q2)}]
Do the sequential dichotomies have a uniform bound?

\end{enumerate}
Question {\bf (Q1)} naturally arises from the approximation of solutions by finite-dimensional projections.
This topic has been investigated by Bellman and Cooke \cite{Bellemmaan-Cooke-63}, Banks and  Manitius \cite{Banks-Manitus},
and Hale and Verduyn Lunel \cite{JKHale-Verduyn}.
It was indicated in \cite[Corollary 3.12, p.25]{Verduyn-01} that
the linear space spanned by all eigenfunctions of the generator for Eq. \eqref{RDDE} is dense in $C[-1,0]$
and each solution of Eq. \eqref{RDDE} has a convergent series expansion for $t>0$.
Question {\bf (Q2)} requires a uniform constant $K:=\sup_{i\in\mathbb{N}}K_{i}<+\infty$, which
was encountered earlier in 1975 by Banks and Manitius \cite{Banks-Manitus} when
they wanted to know the growth of the sequence of bounds just below (2.29) of \cite{Banks-Manitus}.
It was settled down by Hale and Zhang \cite{Hale-Zhang-04} in 2004,
who proved that the sequential dichotomies do not have the uniformity for the general delay equation
\begin{eqnarray}\label{RFDE}
\dot x(t)= Lx_t,
\end{eqnarray}
where $L: C([-r,0],\mathbb{R}^n) \to \mathbb{R}^n$ is a bounded linear operator,
$r>0$ is a constant and the notation $x_t(\theta):= x(t+\theta)$ is defined for all $\theta \in [-r,0]$.
As we know in \cite{BDC-18,Perron-30,Zhou-Zhang-16},
uniformity is also considered for a single dichotomy because of the dependence on the initial time or the angle between two eigenspaces,
even when we consider nonautonomous systems in a finite-dimensional space.
In order to distinguish the different sense of uniformity, we call the version considered in this paper the {\it sequential uniformity}.
The sequential uniformity was denied in \cite{Hale-Zhang-04} for autonomous delay equations, but one may ask:
{\it Does a neutral equation also not have the sequential uniformity?
}

In this paper,  we investigate the sequential uniformity for the scalar neutral equation
\begin{eqnarray}\label{eq:NE}
x'(t)+cx'(t-1)+ax(t)+bx(t-1)=0, \ \ \ \ \  x\in\mathbb{R},
\end{eqnarray}
where $a$, $b$ and $c\neq 0$ are real constant.
All eigenvalues of Eq. \eqref{eq:NE} are determined by the roots of  its characteristic equation
which lie in a strip $\nu_1<{\rm Re}\lambda<\nu_2$ for some real constants $\nu_{1}$ and $\nu_{2}$
(see Lemma \ref{lm:eigen-distribution} below).
This shows that
it admits finitely many spectral gaps or countably infinitely many spectral gaps
and therefore Eq. \eqref{eq:NE} admits finite many or countably infinitely many exponential dichotomies, as
stated below.

\begin{theorem}
\label{thm:1}
Eq. \eqref{eq:NE} with $c\neq 0$ admits countably infinite many exponential dichotomies if
\begin{eqnarray}\label{abc}
a\neq b/|c|\ \ \ \mbox{ and }\ \ \  \ln |c|\neq -\frac{1}{2}(a+b/|c|).
\end{eqnarray}
Otherwise,
Eq. \eqref{eq:NE} with $c\neq 0$ admits finite many exponential dichotomies.
\end{theorem}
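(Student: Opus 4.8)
The plan is to count exponential dichotomies by counting spectral gaps and to read those off from the real parts of the eigenvalues. Each dichotomy $\mathcal{E}(\beta,\alpha;K)$ of the semigroup generated by Eq.~\eqref{eq:NE} comes from a spectral gap, an open interval $(\beta,\alpha)$ containing no real part of an eigenvalue but with eigenvalues on either side, so the number of dichotomies is finite or countably infinite exactly as the set $\{\mathrm{Re}\,\lambda:\ g(\lambda)=0\}$ is finite or infinite, where $g(\lambda):=\lambda+a+(c\lambda+b)e^{-\lambda}$ is the characteristic function. By Lemma~\ref{lm:eigen-distribution} the zeros lie in a vertical strip and, as $|\mathrm{Im}\,\lambda|\to\infty$, force $|e^{-\lambda}|\to 1/|c|$, i.e. they cluster around the line $\mathrm{Re}\,\lambda=\ln|c|$. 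The decisive move is therefore the shift $\zeta=\lambda-\ln|c|$, after which a short computation writes $g$ as
$$(\zeta+p)+s\,(\zeta+q)\,e^{-\zeta},\qquad p:=\ln|c|+a,\quad q:=\ln|c|+b/|c|,\quad s:=\mathrm{sign}(c).$$
Everything is then governed by $p$ and $q$, and condition \eqref{abc} is precisely $p^{2}\neq q^{2}$, since $p=q$ is $a=b/|c|$ and $p=-q$ is $\ln|c|=-\tfrac12(a+b/|c|)$.

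First I would treat the degenerate case $p^{2}=q^{2}$, where I claim all but finitely many zeros sit on $\mathrm{Re}\,\zeta=0$, so that only finitely many real parts occur. If $q=p$ (that is $a=b/|c|$) the reduced function factors as $(\zeta+p)(1+se^{-\zeta})$; the first factor contributes the single real part $-a$, while $1+se^{-\zeta}=0$ forces $|e^{-\zeta}|=1$, hence $\mathrm{Re}\,\zeta=0$. If $q=-p$ (that is $\ln|c|=-\tfrac12(a+b/|c|)$) the function is $(\zeta+p)+s(\zeta-p)e^{-\zeta}$, and multiplying by $e^{\zeta/2}$ produces an entire function of definite parity, namely $2\bigl(\zeta\cosh(\zeta/2)+p\sinh(\zeta/2)\bigr)$ when $s=1$ and $2\bigl(\zeta\sinh(\zeta/2)+p\cosh(\zeta/2)\bigr)$ when $s=-1$. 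Dividing by the hyperbolic cosine (resp. sine) rewrites the equation as $\zeta=-p\tanh(\zeta/2)$ (resp. $\zeta=-p\coth(\zeta/2)$); writing $\zeta=\rho+\mathrm{i}\eta$ and using $\mathrm{Re}\,\tanh(\zeta/2)=\sinh\rho/(\cosh\rho+\cos\eta)$, I would show the large zeros are pinned to the poles $\zeta\approx\mathrm{i}(2k+1)\pi$ and satisfy $\rho=0$, with at most finitely many exceptions. In both branches of $p^{2}=q^{2}$ the real parts thus take finitely many values, and Eq.~\eqref{eq:NE} admits finitely many dichotomies.

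In the remaining case $p^{2}\neq q^{2}$, i.e. under \eqref{abc}, I would show there are infinitely many distinct real parts. Taking moduli in $(\zeta+p)=-s(\zeta+q)e^{-\zeta}$ gives the exact identity $e^{-2\,\mathrm{Re}\,\zeta}=\bigl((\mathrm{Re}\,\zeta+p)^{2}+\tau^{2}\bigr)/\bigl((\mathrm{Re}\,\zeta+q)^{2}+\tau^{2}\bigr)$ with $\tau=\mathrm{Im}\,\zeta$, and expanding for the clustering zeros ($\mathrm{Re}\,\zeta\to 0$, $\tau=\tau_{n}\to\infty$) yields
$$\mathrm{Re}\,\lambda_{n}-\ln|c|=\mathrm{Re}\,\zeta_{n}=\frac{q^{2}-p^{2}}{2\tau_{n}^{2}}+o(\tau_{n}^{-2}),$$
whose leading coefficient $q^{2}-p^{2}=(b/|c|-a)\bigl(2\ln|c|+a+b/|c|\bigr)$ is nonzero exactly by \eqref{abc}. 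Since $\tau_{n}$ increases strictly to $+\infty$, the deviations $\mathrm{Re}\,\zeta_{n}$ are eventually strictly monotone of one sign, so the real parts form infinitely many distinct values accumulating at $\ln|c|$; consecutive values bound countably many spectral gaps, giving countably infinitely many dichotomies.

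The step I expect to be hardest is the second degenerate branch $q=-p$: there is no polynomial factorization there, and turning the "all large zeros lie on the axis" claim into a rigorous statement — rather than a merely asymptotic one — requires the parity reduction above together with a careful winding/argument-principle count near the poles of $\tanh$ (resp. $\coth$), which is precisely where Lemma~\ref{lm:eigen-distribution} is needed to guarantee exactly one zero per strip. A secondary technical point, in the non-degenerate case, is the uniform control of the error term $o(\tau_{n}^{-2})$, for which the localization of $\tau_{n}$ supplied by Lemma~\ref{lm:eigen-distribution} should again suffice.
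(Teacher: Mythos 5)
Your counting strategy is genuinely different from the paper's and largely viable: the paper reduces \eqref{eq:App-1} to the classification tables of Junca--Lombard (Lemma~\ref{lm:App-1}) and reads off the components of $\Omega$ from Lemmas~\ref{lm:conf-c-positve}--\ref{lm:conf-c-negative}, whereas you extract the sign of ${\rm Re}\,\zeta_n$ from the modulus identity, and factor the characteristic function in the degenerate branches. Your non-degenerate argument works, but not quite for the reason you give: the claimed eventual strict monotonicity of ${\rm Re}\,\zeta_n$ does \emph{not} follow from an $o(\tau_n^{-2})$ error (consecutive differences of the main term are only $O(\tau_n^{-3})$, smaller than your error bound); what you actually need -- and do get -- is that ${\rm Re}\,\zeta_n\neq 0$ eventually and ${\rm Re}\,\zeta_n\to 0$, which already forces infinitely many distinct real parts. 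Two further points need care: (a) the degenerate branch $q=-p$ is only sketched, and you correctly identify it as the hard case; the paper avoids this entirely by citing \cite[Theorems 2 and 3]{Junca-Lombard-14}, and without either that citation or a completed winding-number argument this branch is open; (b) your normalization has a sign slip for $c<0$: from $h(\zeta+\ln|c|)=(\zeta+p)+s(\zeta+\ln|c|+b/c)e^{-\zeta}$ the second root of the shift is $\ln|c|+b/c$, not $\ln|c|+b/|c|$, so your identification of $p^2\neq q^2$ with \eqref{abc} needs to be rechecked in that case.

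The more serious gap is the very first sentence: you assert that dichotomies of the solution semigroup correspond exactly to spectral gaps of the generator, and then never return to it. For a $C_0$-semigroup there is no spectral mapping theorem in general, and for a \emph{neutral} equation the essential spectrum of $T(t)$ is governed by the difference operator $D\phi=\phi(0)+c\phi(-1)$, so a gap in the real parts of the eigenvalues does not by itself produce an exponential dichotomy. This is precisely the content of the first half of the paper's proof: for $\mu\in\Omega$ one shifts $x=ye^{\mu t}$, verifies both $h(\lambda+\mu)\neq 0$ \emph{and} $1+ce^{-(\lambda+\mu)}\neq 0$ on a thin vertical strip (the latter holds because $\ln|c|\in\Sigma$, hence $\mu\neq\ln|c|$), and only then invokes Theorem 4.2 of \cite[p.278]{JKHale-Verduyn} to get the dichotomy, followed by Lemma~\ref{lm:A-spectrum}(iv) to see that the projections preserve the real space $C[-1,0]$. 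Without this step your argument counts spectral gaps, not exponential dichotomies, and the theorem as stated is about the latter. You should either supply this semigroup-level argument or cite the relevant result on exponential dichotomies for neutral equations and check its hypotheses.
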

\noindent
Theorem \ref{thm:1} will be proved in section \ref{sec:proof-exist}
by applying the distribution of eigenvalues
obtained in section \ref{sec:roots}.
In the case of countably infinite many exponential dichotomies,
we further answer the two aforementioned questions {\bf (Q1)} and {\bf (Q2)},
i.e., the sequential uniformity and the convergence problem for projections series.

\begin{theorem} \label{thm:2}
Suppose that $c\ne 0$ and (\ref{abc}) is true.
Let $\mathcal{E}(\beta_{m}, \alpha_{m}; K_{m}), m\in\mathbb{N}$, denote all exponential dichotomies of Eq. \eqref{eq:NE}
as obtained in Theorem \ref{thm:1}
and let $P_{m}^{\pm}$, $m\in\mathbb{N}$, denote their corresponding projections.
Then the following statements are true:
\begin{enumerate}
\item[(i)] The norms $\|P_{m}^{\pm}\|$ of $P_{m}^{\pm}: C[-1,0]\to C[-1,0]$ satisfy $\sup_{m\in\mathbb{N}}\|P_{m}^{\pm}\|=+\infty$.
\vskip 3 pt

\item[(ii)] Sequential dichotomies of Eq. \eqref{eq:NE} are not uniform.
\end{enumerate}
\end{theorem}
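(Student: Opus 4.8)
The plan is to deduce statement (ii) from statement (i). Evaluating the dichotomy estimate {\bf (D3)} at $t=0$ gives $\|P_{m}^{+}\|=\|T(0)\Pi_{+}\|\le K_{m}$ and $\|P_{m}^{-}\|\le K_{m}$, so $\sup_{m}\|P_{m}^{\pm}\|=+\infty$ forces $\sup_{m}K_{m}=+\infty$, which is exactly the failure of sequential uniformity. Everything therefore reduces to (i). For (i) it suffices to treat the family obtained by accumulating the rank-two eigenprojections $Q_{\lambda_k}+Q_{\bar\lambda_k}$ along the modes that cluster on the line $\operatorname{Re}\lambda=\ln|c|$: I will denote by $P_{m}$ the member of $\{P_{m}^{+}\}$ or $\{P_{m}^{-}\}$ that accumulates these modes (it differs from the genuine dichotomy projection by a fixed finite-rank operator collecting the finitely many eigenvalues on the other side of $\ln|c|$, which is irrelevant for divergence), show $\|P_{m}\|\gtrsim\log m$, and conclude for the complementary family via $\|I-P\|\ge\|P\|-1$.

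First I would set $\Delta(\lambda)=\lambda+a+(c\lambda+b)e^{-\lambda}$ and import from Lemma~\ref{lm:eigen-distribution} the asymptotics $\lambda_{k}=\ln|c|+(2k+1)\pi\mathbf{i}+\eta_{k}$ (with conjugates $\bar\lambda_{k}$), where $\eta_{k}=O(1/k)$ and $\operatorname{Re}\eta_{k}=O(1/k^{2})$; condition \eqref{abc} is what makes the real parts strictly monotone, so that the gaps, and hence the dichotomies of Theorem~\ref{thm:1}, are genuinely infinite in number. Next I would compute the resolvent of the generator explicitly: solving $(\lambda I-A)\psi=\phi$ on $C[-1,0]$ yields $\psi(\theta)=e^{\lambda\theta}\psi(0)-\int_{0}^{\theta}e^{\lambda(\theta-s)}\phi(s)\,ds$ with $\Delta(\lambda)\psi(0)=\phi(0)+c\phi(-1)-(c\lambda+b)e^{-\lambda}\int_{-1}^{0}e^{-\lambda s}\phi(s)\,ds$, and taking the residue at the simple root $\lambda_{k}$ gives the rank-one spectral projection
\begin{equation*}
Q_{\lambda_k}\phi(\theta)=\frac{e^{\lambda_k\theta}}{\Delta'(\lambda_k)}\Bigl[\phi(0)+c\phi(-1)-(c\lambda_k+b)e^{-\lambda_k}\!\int_{-1}^{0} e^{-\lambda_k s}\phi(s)\,ds\Bigr].
\end{equation*}
The exact identity $(c\lambda_{k}+b)e^{-\lambda_{k}}=-(\lambda_{k}+a)$, valid on the characteristic set, is the decisive simplification: it rewrites the integral coefficient as $(\lambda_{k}+a)/\Delta'(\lambda_{k})$, which a direct computation shows equals $1+O(1/k^{2})$.

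I would then split $P_{m}=\sum_{k\le m}(Q_{\lambda_k}+Q_{\bar\lambda_k})$ into a boundary part and an integral part. The boundary part is $\phi\mapsto(\phi(0)+c\phi(-1))\sum_{k\le m}2\operatorname{Re}\bigl(e^{\lambda_k\theta}/\Delta'(\lambda_k)\bigr)$; since $\Delta'(\lambda_{k})\sim\lambda_{k}\sim(2k+1)\pi\mathbf{i}$, its coefficient function is to leading order $|c|^{\theta}\sum_{k\le m}\sin((2k+1)\pi\theta)/((2k+1)\pi)$, a partial sum of the square-wave series whose partials are uniformly bounded, so the boundary part has operator norm $O(1)$. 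The integral part has kernel $K_{m}(\theta,s)=\sum_{k\le m}2\operatorname{Re}\,e^{\lambda_k(\theta-s)}+(\text{corrections})$, and with $\lambda_{k}=\ln|c|+(2k+1)\pi\mathbf{i}+\eta_{k}$ together with $\sum_{k=0}^{m}\cos((2k+1)u)=\sin(2(m+1)u)/(2\sin u)$ one gets
\begin{equation*}
K_m(\theta,s)=|c|^{\theta-s}\,\frac{\sin\bigl(2(m+1)\pi(\theta-s)\bigr)}{\sin\bigl(\pi(\theta-s)\bigr)}+R_m(\theta,s),
\end{equation*}
a weighted Dirichlet kernel. Its $C[-1,0]\to C[-1,0]$ norm is $\sup_{\theta}\int_{-1}^{0}|K_{m}(\theta,s)|\,ds$; since $|c|^{\theta-s}$ is bounded above and below by positive constants on $|\theta-s|\le1$, taking e.g.\ $\theta=-1/2$ captures the central peak and forces this Lebesgue constant to grow like $\log m$, whence $\|P_{m}\|\ge c_{0}\log m-C_{0}\to+\infty$.

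The main obstacle is to show that $R_{m}$ does not destroy the logarithmic growth. It collects three effects: the $1+O(1/k^{2})$ deviation of the integral coefficient, the real shift $\operatorname{Re}\eta_{k}=O(1/k^{2})$ in the weight, and the phase shift from $\operatorname{Im}\eta_{k}=O(1/k)$. The first two are $\sum_{k}O(1/k^{2})$ and hence yield kernels bounded uniformly in $(m,\theta,s)$. The delicate term is the phase shift: writing $2\operatorname{Re}\,e^{\lambda_k(\theta-s)}=2|c|^{\theta-s}\cos((2k+1)\pi(\theta-s)-\beta_{k})$ with $\beta_{k}=O(1/k)$ and expanding, its contribution is governed by $\sum_{k\le m}\beta_{k}\sin((2k+1)\pi(\theta-s))$, once more a conjugate (square-wave) partial sum with \emph{uniformly} bounded partials. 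The crucial point is that $\operatorname{Im}\eta_{k}$ enters through the \emph{sine} channel rather than the cosine one, so that the logarithmically divergent series $\sum\cos((2k+1)u)/(2k+1)$ never appears; verifying precisely this routing of the $O(1/k)$ corrections is the one step that demands genuine care. Granting it, $\sup_{m,\theta}\int_{-1}^{0}|R_{m}(\theta,s)|\,ds<\infty$, the Dirichlet term dominates, and (i)—hence (ii)—follows.
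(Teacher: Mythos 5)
Your reduction of (ii) to (i) via $K_m\ge\|T(0)\Pi_\pm\|=\|P_m^\pm\|$ is exactly the paper's, and your statement (i) is correct, but you reach it by a genuinely different route. The paper does not compute Lebesgue constants at all: it compares the resolvent and spectral projections of $\mathcal{A}$ with those of the auxiliary generator $\mathcal{B}$ of $x'(t)+cx'(t-1)=0$ (Lemmas \ref{pert-resol} and \ref{pert-prjs}, with the perturbation summed using Bessel's inequality for the Fourier coefficients $\hat a_n(\phi)$), observes that on functions supported in $(-1,0)$ the projections $Q_{z_n}$ are exactly partial-sum operators for the orthogonal system $\{e^{\mathbf{i}2n\pi\theta}\}$, and then invokes Olevskii's divergence theorem for general orthonormal systems under the density condition $\mathcal{N}_{\ell+1}/\mathcal{N}_\ell\to1$ (verified via Lemmas \ref{lm:eigen-strip} and \ref{lm:lim-K-n}) to produce a single $\phi_\star$ whose projection series diverges pointwise on a set of positive measure; Banach--Steinhaus then gives $\sup_m\|P_m^-\|=+\infty$. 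Your argument replaces Olevskii plus Banach--Steinhaus by the classical logarithmic growth of the Lebesgue constants of the (weighted, odd-frequency) Dirichlet kernel, absorbing the deviation of $\lambda_k$ from $\ln|c|+(2k+1)\pi\mathbf{i}$ into a remainder kernel with uniformly bounded $L^1$ norms. What your route buys is elementarity and a quantitative rate $\|P_m^\pm\|\gtrsim\log m$, which the paper never states; what the paper's route buys is the explicit divergent-series function answering question {\bf(Q1)}, and a framework (condition {\bf(P)}) that does not require the included frequencies to be consecutive integers.

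Three points in your sketch deserve explicit verification, none fatal. First, your closed-form kernel $\sum_{k\le m}\cos((2k+1)u)=\sin(2(m+1)u)/(2\sin u)$ presupposes that the modes accumulated by successive dichotomy projections are consecutive conjugate pairs; this is true but rests on the paper's Lemma \ref{lm:eigen-strip} (at most one conjugate pair per vertical line) together with the eventual monotonicity of ${\rm Re}\,\lambda_n$, so you should cite that step rather than treat it as free. Second, for $c<0$ one has $\arg(-c)=0$ and the frequencies are $2k\pi$ rather than $(2k+1)\pi$; the Dirichlet-kernel computation goes through verbatim but the case split should appear. Third, in the delicate phase-shift term note that $\beta_k=-\,{\rm Im}\,\eta_k\cdot(\theta-s)$ depends on $\theta-s$ as well as $k$; writing ${\rm Im}\,\eta_k=\gamma/k+O(1/k^2)$ and using that $|\sin v|\,\bigl|\sum_{k\le m}\cos(2kv)/k\bigr|$ is uniformly bounded (the logarithmic singularity of the cosine sums is killed by the factor $\sin v$) is what actually closes the estimate $\sup_{m,\theta}\int_{-1}^0|R_m(\theta,s)|\,ds<\infty$ that you correctly isolate as the crux.
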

\noindent
Theorem \ref{thm:2}, to be proved in section \ref{sec:nonuniformity},
indicates that there exists a function $\phi\in C[-1,0]$ such that the
sequence $\{\|P_{m}^{\pm}\phi-\phi\|_{\infty}\}_{m\in\mathbb{N}}$ is divergent.
Due to different spectral structures,
Hale and Zhang's method \cite{Hale-Zhang-04}
for Eq. \eqref{RFDE}
is not applicable to Eq. \eqref{eq:NE}.
We develop a new approach by applying the properties of orthonormal systems (see \cite{Olevskii}),
which are abstracted from the eigenfunctions of an auxiliary equation $x'(t)+cx'(t-1)=0$.


\section{Roots of the characteristic equation}
\label{sec:roots}

Substituting $x(t)=e^{\lambda t}$ in Eq. \eqref{eq:NE} yields its characteristic equation
\begin{eqnarray}\label{eq:character}
h(\lambda):=\lambda+ c \lambda e^{-\lambda}+a+b e^{-\lambda} =0.
\end{eqnarray}
The distribution of the roots of Eq. \eqref{eq:character}
plays an important role in the study of sequential dichotomies and  their sequential uniformity.
We first present some basic results on the asymptotic properties of the roots
and also refer to \cite{Bellemmaan-Cooke-63} for more information on the roots of Eq. \eqref{eq:character}.

\begin{lemma}\label{lm:eigen-distribution}
The following statements hold:
\begin{enumerate}
\item[(i)]
There exist two real constants $\nu_{1}$ and $\nu_{2}$ such that all complex roots of Eq. \eqref{eq:character}
lie in the vertical strip $\nu_1<{\rm Re}\lambda<\nu_2$ of the complex plane $\mathbb{C}$.
\vskip 3pt

\item[(ii)]
Let  $z_{n}:=\ln |c|+{\bf i}(2n\pi+\arg (-c))$ for $n\in\mathbb{Z}$
and $B_{n}(\varepsilon)$ denote small open balls centered at $z_{n}$ with radius $\varepsilon$.
Then for a sufficiently small $\varepsilon>0$,
there exists an integer $n_{\varepsilon}>0$ such that
all roots of Eq. \eqref{eq:character} with $|\lambda|>n_{\varepsilon}$ lie in
$\cup_{|n|>n_{\epsilon}}B_{n}(\varepsilon)$
but only one complex root $\lambda_{n}$ is covered by $B_{n}(\varepsilon)$ ($|n|>n_{\varepsilon}$) exactly for each $n$.

\vskip 3pt

\item[(iii)]
The real part ${\rm Re}\lambda_n$ and the imaginary part $ {\rm Im}\lambda_n$ of $\lambda_n$  satisfy
\begin{eqnarray}
 {\rm Re}\lambda_n \!\!\! &=& \!\!\! \ln |c|+\Re_{1}(n),
\label{Real-part-1}
\\
 {\rm Im}\lambda_n \!\!\! &=& \!\!\! (2n\pi+\arg (-c))+\Re_{2}(n),
\label{Imaginary-part-1}
\end{eqnarray}
where
$\Re_1(n)=O(|\frac{1}{n}|), \ \Re_2(n)=O(|\frac{1}{n}|)
$
for sufficiently large $|n|$.
\vskip 3pt

\item[(iv)] Eq. \eqref{eq:character} has at most three different real roots.
\end{enumerate}
\end{lemma}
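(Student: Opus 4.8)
The plan is to work throughout with the factorized form
\[
h(\lambda)=\lambda\bigl(1+ce^{-\lambda}\bigr)+\bigl(a+be^{-\lambda}\bigr)=(c\lambda+b)e^{-\lambda}+(\lambda+a),
\]
in which the unbounded factor $\lambda$ is attached to $1+ce^{-\lambda}$ while the remaining terms stay bounded once ${\rm Re}\,\lambda$ is bounded. For (i) I would trap the real parts by a two-sided comparison. When ${\rm Re}\,\lambda$ is large and positive, $|ce^{-\lambda}|=|c|e^{-{\rm Re}\,\lambda}$ is small, so $|1+ce^{-\lambda}|$ is bounded below by a positive constant and the term $\lambda(1+ce^{-\lambda})$ dominates the bounded remainder; hence $h\neq0$ and we obtain $\nu_2$. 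When ${\rm Re}\,\lambda$ is large and negative, $|\lambda|\to\infty$ and $e^{-{\rm Re}\,\lambda}\to\infty$, so (using $c\neq0$) the term $(c\lambda+b)e^{-\lambda}$ overwhelms $(\lambda+a)$, again giving $h\neq0$ and producing $\nu_1$.

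The heart of (ii) is the observation that the centres $z_n$ are exactly the simple zeros of $1+ce^{-\lambda}$: a direct computation using $c=|c|e^{{\bf i}(\arg(-c)+\pi)}$ gives $ce^{-z_n}=-1$, and since $\tfrac{d}{d\lambda}(1+ce^{-\lambda})=-ce^{-\lambda}$ takes the value $1$ at $z_n$, each zero is simple. I would then apply Rouch\'e's theorem on $\partial B_{n}(\varepsilon)$, comparing $h$ with $g(\lambda):=\lambda(1+ce^{-\lambda})$, for which the perturbation is $h-g=a+be^{-\lambda}$. Because $1+ce^{-\lambda}$ is $2\pi{\bf i}$-periodic, its behaviour near every $z_n$ is identical, so on $\partial B_{n}(\varepsilon)$ one has $|1+ce^{-\lambda}|=\varepsilon+O(\varepsilon^{2})$ uniformly in $n$; consequently $|g(\lambda)|$ grows like $|z_{n}|\varepsilon\sim 2\pi|n|\varepsilon$, while $|h-g|\le|a|+|b|e^{-\nu_{1}}$ remains bounded in the strip. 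Thus $|h-g|<|g|$ on $\partial B_{n}(\varepsilon)$ for all large $|n|$, and $h$ inherits from $g$ a single simple root $\lambda_{n}$ in $B_{n}(\varepsilon)$. The same periodicity supplies a uniform lower bound $|1+ce^{-\lambda}|\ge\delta(\varepsilon)>0$ on the part of the strip lying outside $\cup_{n}B_{n}(\varepsilon)$, whence $|g|\ge\delta|\lambda|$ dominates the bounded perturbation once $|\lambda|>n_{\varepsilon}$; this shows all large roots are captured by the balls.

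For (iii) I would write $\lambda_{n}=z_{n}+\delta_{n}$ with $\delta_{n}\to0$ (guaranteed by (ii)) and substitute. Using $e^{-z_{n}}=-1/c$, the equation $h(\lambda_{n})=0$ collapses to $\lambda_{n}\bigl(1-e^{-\delta_{n}}\bigr)+\bigl(a-\tfrac{b}{c}e^{-\delta_{n}}\bigr)=0$; expanding $1-e^{-\delta_{n}}=\delta_{n}+O(\delta_{n}^{2})$ and recalling $|z_{n}|\sim2\pi|n|$, the leading balance reads $z_{n}\delta_{n}=-(a-b/c)+o(1)$, which forces $\delta_{n}=O(1/n)$. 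Taking real and imaginary parts then yields $\Re_{1}(n)=O(1/n)$ and $\Re_{2}(n)=O(1/n)$ together with the expansions \eqref{Real-part-1}--\eqref{Imaginary-part-1}. For (iv) I would pass to the entire function $f(x):=e^{x}h(x)=(x+a)e^{x}+cx+b$, whose real zeros coincide with the real roots of $h$; since $f''(x)=(x+a+2)e^{x}$ changes sign exactly once, $f'$ has a unique minimum and hence at most two real zeros, so by Rolle's theorem $f$ has at most three.

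The main obstacle I expect is making the Rouch\'e comparison of (ii) uniform across the infinitely many balls while simultaneously securing the off-ball lower bound $\delta(\varepsilon)$. Both rest on exploiting the exact $2\pi{\bf i}$-periodicity of $1+ce^{-\lambda}$, so that a single estimate over one period transfers to every $n$, in concert with the linear growth of the factor $|\lambda|$ that lets the bounded perturbation $a+be^{-\lambda}$ be absorbed; keeping these two bounds compatible as $\varepsilon\to0$ and $|n|\to\infty$ is the delicate bookkeeping.
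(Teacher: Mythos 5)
Your proposal is correct and follows essentially the same route as the paper: trapping the real parts by comparing the exponential and polynomial terms for (i), a Rouch\'e comparison against the zeros of $1+ce^{-\lambda}$ (equivalently $e^{\lambda}+c$) for (ii), the substitution $\lambda_{n}=z_{n}+\delta_{n}$ with $e^{-z_{n}}=-1/c$ for (iii), and a second-derivative-plus-Rolle count for (iv). The only cosmetic differences are that the paper runs Rouch\'e on $e^{\lambda}+c+\frac{ae^{\lambda}+b}{\lambda}$ rather than on $\lambda(1+ce^{-\lambda})$, and in (iv) works with $h''(\lambda)=(c\lambda+b-2c)e^{-\lambda}$ directly instead of first multiplying by $e^{x}$; both are equivalent.
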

\begin{proof}
In the case $\lambda\neq -a$, Eq. \eqref{eq:character} is equivalent to the form
\begin{eqnarray}\label{h-equiv}
e^{\lambda}=-\frac{c\lambda+b}{\lambda+a}=-c+\frac{ac-b}{\lambda+a}.
\end{eqnarray}
Since
\[
|e^{\lambda}|\to 0, \ \ \ |\frac{ac-b}{\lambda+a}|\to 0,\ \ \ \mbox{ as }\ {\rm Re}\lambda\to -\infty,
\]
there exists a constant $\nu_1<0$ such that for all $\lambda$ with ${\rm Re}\lambda\leq \nu_{1}$,
\[
|e^{\lambda}|+|\frac{ac-b}{\lambda+a}|\leq \frac{|c|}{2}<|c|.
\]
This together with \eqref{h-equiv} implies that all complex roots of Eq. \eqref{eq:character} lie in ${\rm Re}\lambda>\nu_1$.
Similarly, we have that
\[
|e^{\lambda}|\to +\infty, \ \ \ |\frac{ac-b}{\lambda+a}|\to 0,\ \ \ \mbox{ as }\ {\rm Re}\lambda\to +\infty.
\]
These limits imply that there exists a constant $\nu_{2}>0$ such that for all $\lambda$ with ${\rm Re}\lambda\geq \nu_{2}$,
\[
|e^{\lambda}|\geq 2|c|>|c|+|\frac{ac-b}{\lambda+a}|.
\]
This together with \eqref{h-equiv} implies that all complex roots of Eq. \eqref{eq:character} lie in ${\rm Re}\lambda<\nu_{2}$.
Hence we conclude that all complex roots of Eq. \eqref{eq:character} lie in $\nu_{1}<{\rm Re}\lambda<\nu_{2}$.
This proves  (i).

In the case $\lambda\neq 0$, Eq. \eqref{eq:character} is equivalent to the form
\[
e^{\lambda}+c+\frac{ae^{\lambda}+b}{\lambda}=0.
\]
Note that  $e^{\lambda}+c=0$ has countably infinite many complex roots $z_{n}$, $n\in\mathbb{Z}$.
Then (ii) is implied by Rouch\'e Theorem (\cite[Theorem 10.43, p.225]{Rudin-87}).

In order to obtain the formulae in (iii), substituting
\[
\lambda_{n}=\ln |c|+\Re_{1}(n)+{\bf i} ((2n\pi+\arg (-c))+\Re_{2}(n))
\]
in Eq. \eqref{eq:character}, we obtain
\begin{eqnarray}\label{eq:expans}
e^{\Re_{1}(n)++{\bf i}\Re_{2}(n)}-1=\frac{ac-b}{c}\cdot\frac{1}{1+a\lambda_{n}^{-1}}\cdot\frac{\bar\lambda_{n}}{|\lambda_{n}|^{2}}
\end{eqnarray}
for sufficiently large $|n|$,
where $\bar{\lambda}$ denotes the conjugacy of $\lambda$.
By (ii) of this lemma,
\[
|\frac{ac-b}{c}\cdot\frac{1}{1+a\lambda_{n}^{-1}}\cdot\frac{\bar\lambda_{n}}{|\lambda_{n}|^{2}}|=O(|\frac{1}{n}|)
\]
for sufficiently large $|n|$.
Then $|\Re_{1}(n)++{\bf i}\Re_{2}(n)|\to 0$ as $|n|\to +\infty$.
Expanding the left-hand side of Eq. \eqref{eq:expans} in a power series about $\Re_{1}(n)++{\bf i}\Re_{2}(n)$,
we further see that   $\Re_1(n)=O(|\frac{1}{n}|)$ and $\Re_2(n)=O(|\frac{1}{n}|)$ for sufficiently large $|n|$.
This proves (iii).

A direct computation yields that
\[
h''(\lambda)=(c\lambda+b-2c)e^{-\lambda},
\]
which has exactly one real zero at $\lambda={(2c-b)}/{c}$.
This shows that $h'(\lambda)$ has at most two different real zeros
and then $h'(\lambda)$ has at most three different real zeros
by the Mean-Value Theorem, implying (iv).
The proof is finished.
\end{proof}

Define two sets $\Sigma$ and $\Omega$ associated with the zeros of $h$ as follows
\begin{eqnarray}\label{df:Sigma-Omega}
\Sigma:= {\rm the\ closure\ of}\ \left\{{\rm Re}\lambda:\,h(\lambda)=0\right\},
\ \ \ \ \
\Omega:=\mathbb{R}/\Sigma.
\end{eqnarray}
We will see that the existence of sequential dichotomies for Eq. \eqref{eq:NE}
is determined by the configuration of $\Omega$ (equivalently, $\Sigma$).
Thus, we call $\Sigma$ and $\Omega$  the {\it dichotomous spectrum set} and the {\it dichotomous resolvent set}
of Eq. \eqref{eq:NE}, respectively.

In order to study the configuration of $\Omega$,
we present a preparatory lemma.
Note that  $\ln |c|$ is the unique  accumulation point of $\Sigma$ (see Lemma \ref{lm:eigen-distribution}).
We first normalize $c$ in $h(\lambda)$, i.e., changing it to $\pm 1$.
Substituting $\lambda=z+\ln|c|$ in $h(\lambda)=0$ yields
\begin{eqnarray}\label{eq:App-1}
z(1+ \delta e^{-z})+A+B e^{-z} =0,
\end{eqnarray}
where
$A=a+\ln |c|,\  B=\ln |c|+{b}/{|c|}$,
and
\begin{eqnarray}
\delta
=\left\{
\begin{aligned}
1,   &&\ \ \ \ \ \mbox{ if }\ c>0,\\
-1, &&\ \ \ \ \ \mbox{ if }\ c<0.
\end{aligned}
\right.
\end{eqnarray}
Concerning the roots of Eq. \eqref{eq:App-1}, we have the next lemma.

\begin{lemma}\cite[Theorems 2 and 3]{Junca-Lombard-14}
\label{lm:App-1}
For $\delta=\pm 1$, the properties of the roots $z_{n}$, $n\in\mathbb{Z}$, of Eq. \eqref{eq:App-1}
are given in Tables \ref{tab-1} and \ref{tab-2} respectively.
\begin{table}[!htbp]
\centering
\begin{tabular}{c|c}
  \hline
{\rm Parameter region} & {\rm The distribution of eigenvalues} \\
\hline
$A>|B|$  & ${\rm Re}z_{n}<0$, \ $n\in\mathbb{Z}$ \\
\hline
$|A|<-B$ & ${\rm Re}z_{n}>0$, \ $n\in\mathbb{Z}$ \\
\hline
$|A|<B$  & ${\rm Re}z_{n}>0$ \mbox{for} $|n|\geq 1$, $z_{0}<0$ \\
\hline
$A<-|B|$  &  ${\rm Re}z_{n}<0$ \mbox{for} $|n|\geq 1$, $z_{0}>0$ \\
\hline
$A=|B|>0$  & ${\rm Re}z_{n}=0$ \mbox{for} $|n|\geq 1$, $z_{0}=-A<0$ \mbox{if} $B>0$ \\
\hline
$A=-B\leq0$  &  ${\rm Re}z_{n}=0$, \ $n\in\mathbb{Z}$  \\
\hline
$A=B<0$  & ${\rm Re}z_{n}=0$ \mbox{for} $|n|\geq 1$, $z_{0}=-A>0$ \\
\hline
\end{tabular}
\vskip 0.4cm
\caption{The distribution of eigenvalues of Eq. \eqref{eq:App-1} with $\delta=1$.}
\label{tab-1}
\end{table}

\begin{table}[!htbp]
\centering
\begin{tabular}{c|c}
  \hline
{\rm Parameter region} & {\rm The distribution of eigenvalues}\\
\hline
$A>|B|$   & ${\rm Re}z_{n}<0$,\ $n\in\mathbb{Z}$\\
\hline
$|A|<B$  & ${\rm Re}z_{n}>0$, \ $n\in\mathbb{Z}$\\
\hline
$|A|<-B$  & ${\rm Re}z_{n}>0$ \mbox{for} $|n|\geq 1$, $z_{0}<0$\\
\hline
$A<-|B|$   & ${\rm Re}z_{n}<0$ \mbox{for} $|n|\geq 1$, $z_{0}>0$\\
\hline
$A=|B|>0$  & ${\rm Re}z_{n}=0$ \mbox{for} $|n|\geq 1$, $z_{0}=-A<0$ \mbox{if} $B<0$\\
\hline
$A=-B<0$  & ${\rm Re}z_{n}=0$ \mbox{for} $|n|\geq 1$, $z_{0}=-A>0$ \\
\hline
$A=B\geq 0$  & ${\rm Re}z_{n}=0$, \ $n\in\mathbb{Z}$\\
\hline
\end{tabular}
\vskip 0.4cm
\caption{The distribution of eigenvalues of Eq. \eqref{eq:App-1} with $\delta=-1$.}
\label{tab-2}
\end{table}
\end{lemma}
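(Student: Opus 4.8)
The plan is to pin down the sign of $\mathrm{Re}\,z_n$ from an exact modulus identity and to dispose of the finitely many degenerate configurations by explicit factorization and symmetry. Writing $z=x+\mathbf{i}y$ and noting that, away from the zero of $\delta z+B$, Eq.~\eqref{eq:App-1} is equivalent to $e^{-z}=-(z+A)/(\delta z+B)$, I would take moduli. Since $\delta^2=1$ one has $|\delta z+B|^2=(\delta x+B)^2+y^2$, so the imaginary part cancels and
\begin{equation*}
e^{-2x}=\frac{(x+A)^2+y^2}{(\delta x+B)^2+y^2}.
\end{equation*}
As $e^{-2x}\lessgtr 1$ according as $x\gtrless 0$, this yields the exact criterion $x>0\iff(x+A)^2<(\delta x+B)^2\iff L(x)<0$, where $L(x):=2(A-\delta B)x+(A^2-B^2)$ is affine in $x$ and independent of $y$. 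This single affine function is the engine of the whole argument.

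Next I would read off both the generic sign and the exceptional root from $L$. For $|A|\ne|B|$ the sign of $L(0)=A^2-B^2$ governs the clustered roots: combined with the asymptotics $\mathrm{Re}\,z_n\to 0$ and $|\mathrm{Im}\,z_n|\to\infty$ from the $z$-translated Lemma~\ref{lm:eigen-distribution}(ii)--(iii), it gives $\mathrm{Re}\,z_n>0$ when $|A|<|B|$ and $\mathrm{Re}\,z_n<0$ when $|A|>|B|$ for all large $|n|$. Whether a single exceptional root $z_0$ of the opposite sign occurs is then decided by the slope $A-\delta B$: solving $L(x)\gtrless 0$ shows that a root in the opposite half-plane can appear only beyond the threshold $x=(B^2-A^2)/(2(A-\delta B))$, and this threshold has exactly the right sign to produce $z_0$ in precisely the rows of Tables~\ref{tab-1}--\ref{tab-2} where it is listed, while being inconsistent (forcing all roots to one side) in the rows where it is absent. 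That such a root is real and unique I would obtain from two facts: the coefficients are real, so complex roots occur in conjugate pairs and an isolated root far from the clusters must equal its conjugate; and the number of real roots is controlled by Lemma~\ref{lm:eigen-distribution}(iv).

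The boundary rows $|A|=|B|$, where $L$ degenerates, are the crux and must be treated separately. If $A=\delta B$ then $A+Be^{-z}=A(1+\delta e^{-z})$ and Eq.~\eqref{eq:App-1} factors as $(z+A)(1+\delta e^{-z})=0$, producing the purely imaginary family $1+\delta e^{-z}=0$ together with the single real root $z_0=-A$, whose sign matches the tabulated entries; note that this root is invisible to the modulus criterion precisely because $\delta z+B$ vanishes there, which is exactly why the factorization is indispensable. In the remaining case $A=-\delta B$ the criterion becomes tautological, so I would instead set $z=\mathbf{i}y$ and reduce the complex equation to one real trigonometric equation (of the type $y\cos(y/2)+A\sin(y/2)=0$); each cluster contributes exactly one real solution, and by the root count of Lemma~\ref{lm:eigen-distribution}(ii) these purely imaginary roots exhaust all roots, giving $\mathrm{Re}\,z_n=0$. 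The main obstacle is therefore not the generic estimate but certifying the exceptional root and the boundary families: the modulus argument cannot see the root at which $\delta z+B=0$ nor resolve the tautological case, and closing these gaps requires the factorization, the conjugate-pair symmetry, and the real-root counting supplied by Lemma~\ref{lm:eigen-distribution}.
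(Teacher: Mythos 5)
First, note that the paper offers no proof of this lemma at all: it is imported verbatim from \cite[Theorems 2 and 3]{Junca-Lombard-14}, so there is no in-paper argument to compare yours against. Judged on its own terms, your central device is correct and is indeed the standard engine for results of this type: writing \eqref{eq:App-1} as $e^{-z}=-(z+A)/(\delta z+B)$ and taking squared moduli gives $e^{-2x}=\bigl((x+A)^2+y^2\bigr)/\bigl((\delta x+B)^2+y^2\bigr)$, hence the clean equivalence $\mathrm{Re}\,z\gtrless 0\iff L(x)\lessgtr 0$ with $L(x)=2(A-\delta B)x+A^2-B^2$, and this criterion is consistent with every row of Tables \ref{tab-1} and \ref{tab-2}. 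The factorization $(z+A)(1+\delta e^{-z})=0$ in the case $A=\delta B$ is also exactly right, and you correctly observe that this is the one case in which the modulus identity is blind to a root because $\delta z+B$ vanishes there.

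The gap is in the counting. Your affine criterion only constrains where a root \emph{may} lie given its own real part; it does not produce the exceptional root $z_0$ in the rows $|A|<B$, $A<-|B|$ (and their $\delta=-1$ counterparts), nor does it show that there is exactly one such root, nor that it is real. Your appeal to ``an isolated root far from the clusters must equal its conjugate'' presupposes that the opposite half-plane contains exactly one root, which is precisely what needs proving; and Lemma \ref{lm:eigen-distribution}(ii) only localizes the roots with $|n|>n_{\varepsilon}$, while part (iv) merely caps the number of real roots at three, so neither pins down the finitely many low-index roots. The same issue recurs in the case $A=-\delta B$, where you must show that the purely imaginary solutions of your reduced trigonometric equation exhaust \emph{all} roots. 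Closing these gaps requires a global count --- for instance the argument principle on large rectangles comparing \eqref{eq:App-1} with $z(1+\delta e^{-z})$, or an explicit monotonicity analysis of the real function $x\mapsto (x+A)+(\delta x+B)e^{-x}$ to locate $z_0$. This is a genuine missing step rather than a routine omission, since it is exactly what distinguishes the rows of the tables that carry an exceptional $z_0$ from those that do not.
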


Now we characterize the configuration of $\Omega$ in the following two lemmas.

\begin{lemma}\label{lm:conf-c-positve}
Suppose that $c>0$ in Eq. \eqref{eq:character}. Then the following assertions are true:
\begin{enumerate}
\item[(i)] If $a+\ln c>|\ln c+b/c|$,
then there is a strictly increasing sequence $\{\gamma_{k}\}_{k\geq 0}$ in $\mathbb{R}$ with the limit $\lim_{k\to +\infty}\gamma_{k}=\ln c$
such that
\begin{eqnarray*}
\Omega=(-\infty,\gamma_{0})\cup\left(\bigcup_{k=0}^{\infty}(\gamma_{k},\gamma_{k+1})\right)\cup (\ln c, +\infty).
\end{eqnarray*}

\item[(ii)] If $|a+\ln c|<-(\ln c+b/c)$,
then there is a strictly decreasing sequence $\{\gamma_{k}\}_{k\geq 0}$ in $\mathbb{R}$ with the limit $\lim_{k\to +\infty}\gamma_{k}=\ln c$
such that
\begin{eqnarray*}
\Omega=(-\infty,\ln c)\cup \left(\bigcup_{k=0}^{\infty}(\gamma_{k+1},\gamma_{k})\right)\cup (\gamma_{0}, +\infty).
\end{eqnarray*}

\item[(iii)] If $|a+\ln c|<\ln c+b/c$,
then there is a strictly decreasing sequence $\{\gamma_{k}\}_{k\geq 1}$ in $\mathbb{R}$ with the limit $\lim_{k\to +\infty}\gamma_{k}=\ln c$
and a real constant $\gamma_{0}<\ln c$ such that
\begin{eqnarray*}
\Omega=(-\infty,\gamma_{0})\cup(\gamma_{0},\ln c)\cup\left(\bigcup_{k=1}^{\infty}(\gamma_{k+1},\gamma_{k})\right)\cup (\gamma_{1}, +\infty).
\end{eqnarray*}

\item[(iv)] If $a+\ln c<-|\ln c+b/c|$,
then there is a strictly increasing sequence $\{\gamma_{k}\}_{k\geq 1}$ in $\mathbb{R}$ with the limit $\lim_{k\to +\infty}\gamma_{k}=\ln c$
and a real constant $\gamma_{0}>\ln c$ such that
\begin{eqnarray*}
\Omega=(-\infty,\gamma_{1})\cup\left(\bigcup_{k=1}^{\infty}(\gamma_{k},\gamma_{k+1})\right)\cup (\ln c,\gamma_{0})\cup (\gamma_{0}, +\infty).
\end{eqnarray*}

\item[(v)] If $|a+\ln c|=|\ln c+b/c|$,
then $\Omega$ contains at most three connected intervals.
\end{enumerate}
\end{lemma}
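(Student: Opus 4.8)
The plan is to reduce everything to the sign data for the real parts of the roots recorded in Table \ref{tab-1}, and then read off the complement $\Omega=\mathbb{R}\setminus\Sigma$. Since $c>0$ we have $\ln|c|=\ln c$, $\delta=1$, and the substitution $\lambda=z+\ln c$ turns Eq. \eqref{eq:character} into Eq. \eqref{eq:App-1} with $A=a+\ln c$ and $B=\ln c+b/c$. A root $\lambda$ of Eq. \eqref{eq:character} satisfies ${\rm Re}\lambda=\ln c+{\rm Re}z$ for the corresponding root $z$ of Eq. \eqref{eq:App-1}, so the sign of ${\rm Re}z_n$ records on which side of $\ln c$ the real part ${\rm Re}\lambda_n$ lies. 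First I would check that the five hypotheses correspond exactly to the parameter regions of Table \ref{tab-1}: $a+\ln c>|\ln c+b/c|$ is $A>|B|$; $|a+\ln c|<-(\ln c+b/c)$ is $|A|<-B$; $|a+\ln c|<\ln c+b/c$ is $|A|<B$; $a+\ln c<-|\ln c+b/c|$ is $A<-|B|$; and the equality $|a+\ln c|=|\ln c+b/c|$ collects the three boundary rows $A=|B|>0$, $A=-B\le 0$ and $A=B<0$.

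The structural input is that, by Lemma \ref{lm:eigen-distribution}, $\ln c$ is the unique accumulation point of $\Sigma$: all but finitely many roots lie in the balls $B_n(\varepsilon)$ centered on the line ${\rm Re}\lambda=\ln c$, so the set $\{{\rm Re}\lambda:h(\lambda)=0\}$ consists of a discrete family of points staying bounded away from $\ln c$ together with a sequence tending to $\ln c$. Consequently $\Sigma$ restricted to either half-line $(-\infty,\ln c)$ or $(\ln c,+\infty)$ is a discrete set whose only possible limit is $\ln c$, and whenever infinitely many real parts lie strictly on one side of $\ln c$ they must take infinitely many distinct values, since a sequence converging to $\ln c$ from one side cannot be eventually constant. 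Sorting these distinct values then produces the monotone sequence $\{\gamma_k\}$ asserted in each case, with $\gamma_k\to\ln c$.

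With these two ingredients the cases become mechanical. For (i), Table \ref{tab-1} gives ${\rm Re}z_n<0$ for all $n$, so every real part lies in $(-\infty,\ln c)$ and accumulates at $\ln c$ from below; listing the distinct values increasingly as $\gamma_0<\gamma_1<\cdots\to\ln c$ and deleting them together with $\ln c$ yields the stated decomposition, the free half-line $(\ln c,+\infty)$ appearing because no real part exceeds $\ln c$. Case (ii) is the mirror image with ${\rm Re}z_n>0$ for all $n$, giving a decreasing sequence and the free half-line $(-\infty,\ln c)$. In (iii) and (iv) the table produces one exceptional \emph{real} root $z_0$ (respectively $z_0<0$ and $z_0>0$) while all other real parts sit strictly on the opposite side of $\ln c$; this $z_0$ contributes the isolated point $\gamma_0=\ln c+z_0$, and the extra bounded gap $(\gamma_0,\ln c)$ in (iii), or $(\ln c,\gamma_0)$ in (iv), appears precisely because that isolated point is separated from the accumulation point $\ln c$ by a genuine interval. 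Finally, in (v) each boundary row forces ${\rm Re}z_n=0$ for $|n|\ge 1$, i.e. ${\rm Re}\lambda_n=\ln c$, so $\Sigma$ is the finite set $\{\ln c\}$ together with at most one additional isolated real part coming from $z_0$, and its complement is a union of at most three intervals.

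The only genuinely delicate point, which I would argue carefully rather than assert, is the discreteness-plus-separation claim used to pass from ``sign of ${\rm Re}z_n$'' to ``open gaps between consecutive elements of $\Sigma$''. One must rule out that infinitely many roots share a common real part short of $\ln c$, which is handled by the convergence-to-$\ln c$ argument above, and one must confirm that the finitely many low-index roots not controlled by Lemma \ref{lm:eigen-distribution}(ii) still obey the side prescribed by Table \ref{tab-1}; this is exactly what Lemma \ref{lm:App-1} guarantees for \emph{all} $n\in\mathbb{Z}$. Once this separation is in place, the identification of $\Omega$ in each case is immediate.
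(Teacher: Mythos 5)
Your proposal is correct and follows essentially the same route as the paper: substitute $\lambda=z+\ln c$ to reduce to Eq.~\eqref{eq:App-1} with $A=a+\ln c$, $B=\ln c+b/c$, $\delta=1$, match the five hypotheses to the rows of Table~\ref{tab-1}, and read off $\Omega$ as the complement of $\{{\rm Re}\,z_n+\ln c\}\cup\{\ln c\}$. Your added justification that the real parts form a discrete set accumulating only at $\ln c$ (hence admit a monotone enumeration) is a point the paper passes over silently, but it does not change the argument.
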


\begin{proof}
If $c>0$, the parameters $A$, $B$ and $\delta$ in Eq. \eqref{eq:App-1} satisfy
\[
A=a+\ln c,\ \ \  B=\ln c+b/c, \ \ \ \delta=1.
\]
Let $z_{n}$, $n\in\mathbb{Z}$, be defined as in Table \ref{tab-1} of Lemma \ref{lm:App-1}.
Note that  $\lambda$ is a root of $h(\lambda)=0$ if and only if $z=\lambda-\ln c$ is a root of Eq. \eqref{eq:App-1}.
Then $\Omega$, defined in \eqref{df:Sigma-Omega}, can be presented as
\begin{eqnarray}\label{omg-1}
\Omega=\mathbb{R}/(\{{\rm Re} z_{n}+\ln c :\, n\in \mathbb{Z}\}\cup \{\ln c\}).
\end{eqnarray}

To prove statements (i)-(iv), we choose $\gamma_{k}$, $k\geq 0$ such that
$
\{\gamma_{k}\}_{k\geq 0}=\{{\rm Re} z_{n}+\ln c\}_{n\in \mathbb{Z}},
$
and arrange $\gamma_{k}$  in the following order:
If  $a+\ln c>|\ln c+b/c|$ or $|a+\ln c|<-(\ln c+b/c)$, which implies that $A>|B|$ or $|A|<-B$,
then $\gamma_{k}$, $k\geq 0$, are ordered according to decreasing $|\gamma_{k}-\ln c|$;
If $|a+\ln c|<\ln c+b/c$ or $a+\ln c<-|\ln c+b/c|$,
which implies that $|A|<B$ or $A<-|B|$,
then we let $\gamma_{0}=z_{0}$ and order $\gamma_{k}$, $k\geq 1$, according to decreasing $|\gamma_{k}-\ln c|$.
Thus, by Table \ref{tab-1} of Lemma \ref{lm:App-1} and \eqref{omg-1}, we conclude statements (i)-(iv).

In the case that $|a+\ln c|=|\ln c+b/c|$, we have $|A|=|B|$.
Using Table \ref{tab-1} of Lemma \ref{lm:App-1} and \eqref{omg-1} again, we conclude statement (v).
Therefore, the proof is completed.
\end{proof}

\begin{lemma}\label{lm:conf-c-negative}
Suppose that $c<0$ in Eq. \eqref{eq:character}.
Then the following assertions are true:
\begin{enumerate}
\item[(i)] If $a+\ln (-c)>|\ln (-c)-b/c|$,
then there is a strictly increasing sequence $\{\gamma_{k}\}_{k\geq 0}$ in $\mathbb{R}$ with the limit $\lim_{k\to +\infty}\gamma_{k}=\ln (-c)$ such that
\begin{eqnarray*}
\Omega=(-\infty,\gamma_{0})\cup\left(\bigcup_{k=0}^{\infty}(\gamma_{k},\gamma_{k+1})\right)\cup (\ln (-c), +\infty).
\end{eqnarray*}

\item[(ii)]
If $|a+\ln (-c)|<\ln (-c)-b/c$,
then there is a strictly decreasing sequence $\{\gamma_{k}\}_{k\geq 0}$ in $\mathbb{R}$ with the limit $\lim_{k\to +\infty}\gamma_{k}=\ln (-c)$
such that
\begin{eqnarray*}
\Omega=(-\infty,\ln (-c))\cup \left(\bigcup_{k=0}^{\infty}(\gamma_{k+1},\gamma_{k})\right)\cup (\gamma_{0}, +\infty).
\end{eqnarray*}

\item[(iii)]
 If $|a+\ln (-c)|<-(\ln (-c)-b/c)$,
then there is a strictly decreasing sequence $\{\gamma_{k}\}_{k\geq 1}$ in $\mathbb{R}$ with the limit $\lim_{k\to +\infty}\gamma_{k}=\ln (-c)$
and a real constant $\gamma_{0}<\ln (-c)$ such that
\begin{eqnarray*}
\Omega=(-\infty,\gamma_{0})\cup(\gamma_{0},\ln (-c))\cup\left(\bigcup_{k=1}^{\infty}(\gamma_{k+1},\gamma_{k})\right)\cup (\gamma_{1}, +\infty).
\end{eqnarray*}

\item[(iv)] If $a+\ln (-c)<-|\ln (-c)-b/c|$,
then there is a strictly increasing sequence $\{\gamma_{k}\}_{k\geq 1}$ in $\mathbb{R}$ with the limit $\lim_{k\to +\infty}\gamma_{k}=\ln (-c)$
and a real constant $\gamma_{0}>\ln c$ such that
\begin{eqnarray*}
\Omega=(-\infty,\gamma_{1})\cup\left(\bigcup_{k=1}^{\infty}(\gamma_{k},\gamma_{k+1})\right)\cup (\ln (-c),\gamma_{0})\cup (\gamma_{0}, +\infty).
\end{eqnarray*}

\item[(v)] If $|a+\ln (-c)|=|\ln (-c)-b/c|$,
then $\Omega$ contains at most three connected intervals.
\end{enumerate}
\end{lemma}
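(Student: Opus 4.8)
The plan is to follow the same route as in the proof of Lemma~\ref{lm:conf-c-positve}, now invoking Table~\ref{tab-2} of Lemma~\ref{lm:App-1} in place of Table~\ref{tab-1}, since $c<0$ corresponds to $\delta=-1$. First I would specialize the normalized parameters. Because $|c|=-c$ when $c<0$, the coefficients of Eq.~\eqref{eq:App-1} become $A=a+\ln(-c)$, $B=\ln|c|+b/|c|=\ln(-c)-b/c$, and $\delta=-1$. By the substitution $\lambda=z+\ln|c|=z+\ln(-c)$ introduced before Eq.~\eqref{eq:App-1}, $\lambda$ is a root of $h(\lambda)=0$ if and only if $z=\lambda-\ln(-c)$ is a root of Eq.~\eqref{eq:App-1}. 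Recalling from Lemma~\ref{lm:eigen-distribution} that $\ln|c|=\ln(-c)$ is the unique accumulation point of $\Sigma$, the definition \eqref{df:Sigma-Omega} then yields
\begin{equation}\label{omg-2}
\Omega=\mathbb{R}\big/\big(\{{\rm Re}\,z_{n}+\ln(-c):n\in\mathbb{Z}\}\cup\{\ln(-c)\}\big).
\end{equation}

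Next I would treat statements (i)--(iv) by reading off the sign pattern of ${\rm Re}\,z_{n}$ from Table~\ref{tab-2}. The four parameter regions in the statement translate, respectively, into $A>|B|$, $|A|<B$, $|A|<-B$, and $A<-|B|$, which are exactly the first four rows of Table~\ref{tab-2}; here one must keep track of the sign in $B=\ln(-c)-b/c$, so that $|a+\ln(-c)|<\ln(-c)-b/c$ becomes $|A|<B$ and $|a+\ln(-c)|<-(\ln(-c)-b/c)$ becomes $|A|<-B$. In the regions $A>|B|$ and $|A|<B$, all roots satisfy ${\rm Re}\,z_{n}<0$ (resp.\ ${\rm Re}\,z_{n}>0$), so I would set $\{\gamma_{k}\}_{k\geq0}=\{{\rm Re}\,z_{n}+\ln(-c)\}_{n\in\mathbb{Z}}$ and order them by decreasing $|\gamma_{k}-\ln(-c)|$; inserting these into \eqref{omg-2} gives the monotone sequences converging to $\ln(-c)$ and the interval decompositions of (i) and (ii). In the regions $|A|<-B$ and $A<-|B|$, Table~\ref{tab-2} isolates a single root $z_{0}$ on the side opposite to the accumulation, so I would set $\gamma_{0}=z_{0}+\ln(-c)$ and order the remaining $\gamma_{k}$, $k\geq1$, by decreasing $|\gamma_{k}-\ln(-c)|$, producing the extra isolated interval in (iii) and (iv).

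Finally, for statement (v) the hypothesis $|a+\ln(-c)|=|\ln(-c)-b/c|$ is $|A|=|B|$, which falls into the last three rows of Table~\ref{tab-2}; there ${\rm Re}\,z_{n}=0$ for all but at most one index $n$, so the removed set in \eqref{omg-2} consists of at most two distinct points, whence $\Omega$ splits into at most three connected intervals. I do not anticipate a genuine obstacle, since the argument is structurally identical to the $c>0$ case; the only point demanding care is the bookkeeping of signs when matching the parameter regions of the statement to the rows of Table~\ref{tab-2}, caused by the reflection $b/|c|=-b/c$ and the opposite monotonicity of the root branches for $\delta=-1$.
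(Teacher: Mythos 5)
Your proposal matches the paper's own proof essentially step for step: normalize to Eq.~\eqref{eq:App-1} with $A=a+\ln(-c)$, $B=\ln(-c)-b/c$, $\delta=-1$, express $\Omega$ as $\mathbb{R}$ minus the translated real parts $\{{\rm Re}\,z_n+\ln(-c)\}\cup\{\ln(-c)\}$, and read the sign patterns off Table~\ref{tab-2}. Your bookkeeping is correct (indeed, writing $\gamma_0=z_0+\ln(-c)$ rather than the paper's $\gamma_0=z_0$ fixes a small slip in the translation), so no further comment is needed.
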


\begin{proof}
If $c<0$, the parameters $A$, $B$ and $\delta$ in Eq. \eqref{eq:App-1} satisfy
\[
A=a+\ln (-c),\ \ \  B=\ln (-c)-b/c, \ \ \ \delta=-1.
\]
Let $z_{n}$, $n\in\mathbb{Z}$, be defined as in Table \ref{tab-2} of Lemma \ref{lm:App-1}.
Note that  $\lambda$ is a root of $h(\lambda)=0$ if and only if $z=\lambda-\ln (-c)$ is a root of Eq. \eqref{eq:App-1}.
Then $\Omega$, defined in \eqref{df:Sigma-Omega}, can be presented as
\begin{eqnarray}\label{omg-2}
\Omega=\mathbb{R}/(\{{\rm Re} z_{n}+\ln (-c) :\, n\in \mathbb{Z}\}\cup \{\ln (-c)\}).
\end{eqnarray}

To prove statements (i)-(iv), we choose $\gamma_{k}$, $k\geq 0$, such that
$
\{\gamma_{k}\}_{k\geq 0}=\{{\rm Re} z_{n}+\ln (-c)\}_{n\in \mathbb{Z}},
$
and arrange $\gamma_{k}$  in the following order:
If  $a+\ln (-c)>|\ln (-c)-b/c|$ or $|a+\ln (-c)|<\ln (-c)-b/c$, which implies that $A>|B|$ or $|A|<B$,
then $\gamma_{k}$, $k\geq 0$, are ordered according to decreasing $|\gamma_{k}-\ln c|$;
If $|a+\ln (-c)|<-(\ln (-c)-b/c)$ or $a+\ln (-c)<-|\ln (-c)-b/c|$,
which implies that $|A|<-B$ or $A<-|B|$,
then we let $\gamma_{0}=z_{0}$ and order $\gamma_{k}$, $k\geq 1$, according to decreasing $|\gamma_{k}-\ln c|$.
Thus, by Table \ref{tab-2} of Lemma \ref{lm:App-1} and \eqref{omg-2}, we conclude statements (i)-(iv).

In the case that $|a+\ln (-c)|=|\ln (-c)-b/c|$, we have $|A|=|B|$.
Using Table \ref{tab-2} of Lemma \ref{lm:App-1} and \eqref{omg-2} again, we conclude statement (v).
Therefore, the proof is completed.
\end{proof}


\section{Existence of dichotomies for the solution semigroup}
\label{sec:proof-exist}

In this section,
consider the sequential dichotomies for Eq. \eqref{eq:character} on the phase space $C[-1,0]$,
i.e., the Banach space of continuous functions on $[-1,0]$ endowed with the supremum norm $\|\cdot\|_{\infty}$.
By Theorem 8.3 of \cite[p. 65]{JKHale-Verduyn},
for a given initial value $\phi\in C[-1,0]$,
there exists a unique solution $x(t,\phi)$ for $t\geq -1$ of Eq. \eqref{eq:NE} satisfying
the initial conditions $x(\theta,\phi)=\phi(\theta)$ for $\theta\in[-1,0]$.
Then we can define the solution operator $T(t):  C[-1,0]\to  C[-1,0]$ by
\begin{eqnarray*}
T(t)\phi=x_{t}(\cdot,\phi),\ \ \ \ \ t\geq 0,\ \ \phi\in  C[-1,0],
\end{eqnarray*}
where  $x_{t}(\theta,\phi)=x(t+\theta,\phi)$ for $\theta\in [-1,0]$.
Furthermore, the family $\{T(t):\,t\geq 0\}$ is a strongly continuous semigroup on $ C[-1,0]$ (see \cite{Pazy-83}),
i.e.,
\[
\lim_{t\to 0^{+}}\|T(t)\phi-\phi\|_{\infty}=0,\ \ \ \ \ \forall\, \phi\in C[-1,0].
\]
The corresponding infinitesimal generator $\mathcal{A}$ is given by
\begin{eqnarray}\label{df:generator}
\mathcal{A}\phi:=\lim_{t\rightarrow 0^{+}}\frac{T(t)\phi-\phi}{t},\ \ \ \ \
\phi\in \mathcal{D}(\mathcal{A}),
\end{eqnarray}
where the domain $\mathcal{D}(\mathcal{A})$ of  $\mathcal{A}$ is determined by
\begin{eqnarray*}
\mathcal{D}(\mathcal{A})=\left\{\phi\in  C[-1,0]: \dot\phi\in  C[-1,0],\, \dot\phi(0)+c\dot\phi(-1)+a\phi(0)+b\phi(-1)=0 \right\}.
\end{eqnarray*}
Here $\dot\phi(0)$ and $\dot\phi(-1)$ denote the left-hand derivative at $0$ and the right-hand derivative at $-1$, respectively.
We refer to Chapter 9 of \cite{JKHale-Verduyn} for more detail on neutral equations and their solution  semigroups.

In order to analyze the spectrum of the generator $\mathcal{A}$,
we extend the real Banach space $ C[-1,0]$ to $ \mathcal{C}_{\mathbb{C}}:= C[-1,0]\oplus {\bf i} C[-1,0]$,
and $\mathcal{A}$ on $\mathcal{C}_{\mathbb{C}}$  (see \cite{Kato-95}) such that
\[
\mathcal{A}(\phi+{\bf i}\psi)=\mathcal{A}\phi+{\bf i}\mathcal{A}\psi,
\ \ \ \ \ \forall\,
\phi,\,\psi\in \mathcal{D}(\mathcal{A}).
\]
For convenience,
we still write $\mathcal{A}$ for its extension on $ \mathcal{C}_{\mathbb{C}}$.

The basic properties of the generator $\mathcal{A}$ are summarized in the next lemma.

\begin{lemma}\label{lm:A-spectrum}
The generator $\mathcal{A}$ has the following properties:
\begin{enumerate}
\item[(i)]
The spectrum $\sigma (\mathcal{A})$ contains eigenvalues (point spectrum) only,
which are the complex roots of the characteristic equation \eqref{eq:character}.
An eigenvector associated with an eigenvalue $\lambda$ is $e^{\lambda\theta}$, defined for all $\theta\in [-1,0]$.
\vskip 3pt

\item[(ii)] For each $\lambda$ in the resolvent set $\rho(\mathcal{A})$ of the generator $\mathcal{A}$,
the resolvent $R(\lambda,\mathcal{A})$ is given by
\begin{eqnarray*}
R(\lambda,\mathcal{A})\varphi(\theta)
= e^{\lambda\theta}\!\left\{(h(\lambda))^{-1}F(\lambda,\varphi)+\int_{\theta}^{0}e^{-\lambda s} \varphi(s)d s\right\},
\ \ \ \ \ \varphi\in \mathcal{C}_{\mathbb{C}},
\end{eqnarray*}
where
\begin{eqnarray*}
F(\lambda,\varphi):=
\varphi(0)+ c\varphi(-1)-c\lambda e^{-\lambda}\int^{0}_{-1}e^{-\lambda s}\varphi(s)d s
 -be^{-\lambda}\int^{0}_{-1}e^{-\lambda s}\varphi(s)d s.
\end{eqnarray*}
\vskip 3pt

\item[(iii)] If $\lambda$ a simple eigenvalue of $\mathcal{A}$,
then its spectral projection
\begin{eqnarray}\label{prj}
P_{\lambda} \varphi := \frac{1}{2\pi {\bf i}}\int_{\Gamma_\lambda} R(\omega,\mathcal{A})\varphi\, d \omega,
\ \ \ \ \ \varphi \in  \mathcal{C}_{\mathbb{C}},
\end{eqnarray}
where $\Gamma_\lambda$ is a contour such that $\lambda$ is the unique eigenvalue inside, can be expressed as
\begin{eqnarray*}
(P_\lambda\varphi)(\theta)  = e^{\lambda \theta}(h'(\lambda))^{-1}F(\lambda,\varphi), \ \ \ \ \ \varphi \in  \mathcal{C}_{\mathbb{C}}.
\end{eqnarray*}
\vskip 3pt

\item[(iv)] If $\lambda$ is an eigenvalue of $\mathcal{A}$,
then $\bar\lambda$ is also an eigenvalue of $\mathcal{A}$ and
\[
P_{\lambda}+P_{\bar\lambda}\,:\, C[-1,0]\to C[-1,0].
\]
\end{enumerate}
\end{lemma}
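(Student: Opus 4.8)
\medskip

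The plan is to realise $\mathcal{A}$ concretely as the differentiation operator $\mathcal{A}\phi=\dot\phi$ on the domain $\mathcal{D}(\mathcal{A})$ and to extract (i) and (ii) simultaneously from an explicit solution of the resolvent equation; parts (iii) and (iv) will then follow from that formula by a residue computation and a conjugation symmetry. First I would fix $\lambda\in\mathbb{C}$ and $\varphi\in\mathcal{C}_{\mathbb{C}}$ and solve $(\lambda I-\mathcal{A})\phi=\varphi$, that is the scalar linear equation $\dot\phi(\theta)-\lambda\phi(\theta)=-\varphi(\theta)$ on $[-1,0]$. Integrating the factor $e^{-\lambda\theta}\phi$ gives the one-parameter family $\phi(\theta)=e^{\lambda\theta}\{\phi(0)+\int_{\theta}^{0}e^{-\lambda s}\varphi(s)\,ds\}$, so that $\phi$ is pinned down by the single scalar $\phi(0)$. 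Substituting $\dot\phi=\lambda\phi-\varphi$ and $\phi(-1)=e^{-\lambda}\{\phi(0)+\int_{-1}^{0}e^{-\lambda s}\varphi(s)\,ds\}$ into the boundary constraint $\dot\phi(0)+c\dot\phi(-1)+a\phi(0)+b\phi(-1)=0$ collapses it to a single equation for $\phi(0)$ whose coefficient is exactly $h(\lambda)$ and whose right-hand side is exactly $F(\lambda,\varphi)$. Hence the resolvent equation is uniquely solvable iff $h(\lambda)\neq 0$, giving $\phi(0)=(h(\lambda))^{-1}F(\lambda,\varphi)$ and the formula of (ii).

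This one computation already yields (i). For $h(\lambda)\neq 0$ the operator $\lambda I-\mathcal{A}$ is boundedly invertible, so $\{\lambda:h(\lambda)\neq0\}\subseteq\rho(\mathcal{A})$; taking $\varphi=0$ shows that when $h(\lambda)=0$ the function $\phi(\theta)=e^{\lambda\theta}$ is a nontrivial null vector of $\lambda I-\mathcal{A}$, so every zero of $h$ is an eigenvalue with eigenvector $e^{\lambda\theta}$. Since $\sigma(\mathcal{A})$ and $\rho(\mathcal{A})$ partition $\mathbb{C}$, it follows that $\sigma(\mathcal{A})=\{\lambda:h(\lambda)=0\}$ and that this spectrum is entirely point spectrum, proving (i).

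For (iii) I would evaluate the Dunford integral in \eqref{prj} by residues. In the bracket of (ii) the factor $e^{\omega\theta}$ and the integral $\int_{\theta}^{0}e^{-\omega s}\varphi(s)\,ds$ are entire in $\omega$, so the only singularity enclosed by $\Gamma_\lambda$ is the pole of $(h(\omega))^{-1}$ at $\omega=\lambda$. A simple eigenvalue corresponds to a simple zero of $h$, so $h'(\lambda)\neq0$ and $(h(\omega))^{-1}$ has residue $(h'(\lambda))^{-1}$ there; the residue theorem then gives $(P_\lambda\varphi)(\theta)=e^{\lambda\theta}(h'(\lambda))^{-1}F(\lambda,\varphi)$. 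For (iv) I would use that $a,b,c$ are real: conjugating the characteristic equation gives $\overline{h(\lambda)}=h(\bar\lambda)$, so $h(\lambda)=0$ forces $h(\bar\lambda)=0$ and $\bar\lambda$ is again an eigenvalue. For real $\varphi$ the formula of (ii) yields $\overline{R(\omega,\mathcal{A})\varphi(\theta)}=R(\bar\omega,\mathcal{A})\varphi(\theta)$, because $\overline{F(\omega,\varphi)}=F(\bar\omega,\varphi)$ and $\overline{h(\omega)}=h(\bar\omega)$. Conjugating \eqref{prj} and noting that complex conjugation sends the positively oriented contour $\Gamma_\lambda$ to a negatively oriented contour around $\bar\lambda$---the orientation reversal cancelling the sign from conjugating $d\omega$---I obtain $\overline{(P_\lambda\varphi)(\theta)}=(P_{\bar\lambda}\varphi)(\theta)$. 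Therefore $(P_\lambda+P_{\bar\lambda})\varphi=2\,{\rm Re}(P_\lambda\varphi)$ is real-valued, so $P_\lambda+P_{\bar\lambda}$ maps $C[-1,0]$ into itself.

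The work here is organisational rather than deep. The hard part will be the bookkeeping in the first paragraph: one must substitute every occurrence of $\dot\phi$ and $\phi(-1)$ and verify that the boundary constraint really factors through the scalar $h(\lambda)$ with the precise remainder $F(\lambda,\varphi)$, since a single misplaced $e^{-\lambda}$ would spoil the identification. The only other point demanding care is the orientation of the conjugated contour in (iv); both are routine once set up, and no compactness or essential-spectrum argument is needed, because the explicit resolvent already confines $\sigma(\mathcal{A})$ to the zeros of $h$.
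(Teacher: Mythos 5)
Your proposal is correct and follows essentially the same route as the paper: variation of constants plus the boundary constraint to get the resolvent and $\phi(0)=(h(\lambda))^{-1}F(\lambda,\varphi)$, the residue theorem for (iii) with the entire term contributing nothing, and the conjugation symmetry $\overline{h(\omega)}=h(\bar\omega)$, $\overline{R(\omega,\mathcal{A})\varphi}=R(\bar\omega,\mathcal{A})\varphi$ for (iv). The only difference is that for (i) you deduce $\sigma(\mathcal{A})=\{h=0\}$ and its pure point nature directly from the explicit resolvent, whereas the paper simply cites Lemma 2.1 of Hale--Verduyn Lunel; your version is self-contained and equally valid.
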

\begin{proof}
It follows from \cite[Lemma 2.1, p.263]{JKHale-Verduyn} that
$\sigma (\mathcal{A})$ only contains eigenvalues, which are the roots of the characteristic equation $h(\lambda)=0$.
For each eigenvalue $\lambda$, which solves $h(\lambda)=0$,
we can compute that function $e^{\lambda\theta}$ for $\theta\in [-1,0]$ lies in $\mathcal{D}(\mathcal{A})$
and $\mathcal{A}e^{\lambda\,\theta}=\lambda e^{\lambda\,\theta}$, $\theta\in [-r,0]$,
implying that $e^{\lambda\,\cdot}$ is an eigenvector of $\lambda$.
This proves (i).

Suppose that $\lambda\in\rho(\mathcal{A})$.
For each $\varphi \in  \mathcal{C}_{\mathbb{C}}$, the equation $(\lambda I-\mathcal{A})\phi=\varphi$
has a unique solution $\phi \in \mathcal{D}(\mathcal{A})$,
i.e., $\phi$ solves
\begin{eqnarray}\label{boundary}
\left\{
\begin{aligned}
 &\lambda\phi-\dot \phi= \varphi,\\
 &\dot\phi(0)+c\dot\phi(-1)+a\phi(0)+b\phi(-1)=0.
\end{aligned}
\right.
\end{eqnarray}
By the Variation of Constant Formula,
the first equation in \eqref{boundary} has  solutions of the form
\begin{eqnarray}\label{solution}
\phi(\theta)=e^{\lambda\theta}\phi(0)+ \int^{0}_{\theta}e^{\lambda(\theta-s)} \varphi(s)d s, \quad -1\leq\theta\leq 0,
\end{eqnarray}
where $\phi(0)$ will be determined by the second equation of \eqref{boundary}.
Substituting \eqref{solution} in the second equation of \eqref{boundary} yields
\begin{eqnarray*}
0\!\!\!&=&\!\!\!
\lambda \phi(0)-\varphi(0)+c \lambda e^{-\lambda}\phi(0)-c\varphi(-1)
+c\lambda e^{-\lambda}\int^{0}_{-1}e^{-\lambda s}\varphi(s)d s\\
\!\!\!&&\!\!\!
+ a\phi(0)+b e^{-\lambda}\phi(0)+be^{-\lambda}\int^{0}_{-1}e^{-\lambda s}\varphi(s)d s,
\end{eqnarray*}
Note that $h(\lambda)\neq 0$.
Then $\phi(0)$ is uniquely determined by
\[
\phi(0)=(h(\lambda))^{-1}F(\lambda,\varphi).
\]
Substituting it in \eqref{solution} yields (ii).

Suppose that  $\lambda$ is a simple eigenvalue of $\mathcal{A}$.
Then $\lambda$ is also a simple root of \eqref{eq:character} (see \cite{JKHale-Verduyn}).
By (ii) in this lemma,
\begin{eqnarray*}
(P_\lambda\varphi)(\theta) = I_1(\theta)+ I_2(\theta),\ \ \ \ \ \varphi\in  \mathcal{C}_{\mathbb{C}}, \ \theta\in[-1,0],
\end{eqnarray*}
where
\begin{eqnarray*}
I_1(\theta)\!\!\!&:=&\!\!\!
      \frac{1}{2\pi {\bf i}}\int_{\Gamma_\lambda} e^{\omega\theta}(h(\omega))^{-1}F(\omega,\varphi)\,dz,
\\
I_2(\theta)\!\!\!&:=&\!\!\!
      \frac{1}{2\pi {\bf i}}\int_{\Gamma_\lambda} e^{\omega\theta}\int_{\theta}^{0}e^{-\omega s} \varphi(s)ds \, d \omega.
\end{eqnarray*}
Note that  $e^{\theta\,\cdot}\int_{\theta}^{0}e^{- s\,\cdot} \varphi(s)ds$
and $e^{\theta\,\cdot}F(\cdot\,,\varphi)$ are analytic in $\mathbb{C}$ for each $\theta\in [-1,0]$.
Then by the Residue Theorem (see \cite[Theorem 10.42, p.224]{Rudin-87}), we get that
\begin{eqnarray*}
I_1(\theta)\!\!\!&=&\!\!\!\frac{1}{2\pi {\bf i}}\int_{\Gamma_\lambda} (h(\omega))^{-1}F(\omega,\varphi)e^{\omega\theta} d \omega
\nonumber\\
\!\!\!&=&\!\!\!\lim_{\omega\rightarrow \lambda} (\omega-\lambda) (h(\omega))^{-1}F(\omega,\varphi)e^{\omega\theta}\nonumber\\
\!\!\!&=&\!\!\!e^{\lambda\theta}(h'(\lambda))^{-1}F(\lambda,\varphi).
\end{eqnarray*}
and $I_2(\theta)\equiv 0$. This yields (iii).

Suppose that  $\lambda$ is an eigenvalue of $\mathcal{A}$.
Statement (i) in this lemma yields $h(\lambda)=0$. Then we get $h(\bar\lambda)= \overline{h(\lambda)}=0$.
Using (i) again, we have that $\bar\lambda$ is also an eigenvalue of $\mathcal{A}$.
Let $\Gamma_\lambda$ be a contour such that $\lambda$ is the unique eigenvalue inside.
Then $\bar \Gamma_\lambda=\{z\in \mathbb{C}\,:\, \bar z\in \Gamma_{\lambda}\}$ is a contour such that $\bar\lambda$ is the unique eigenvalue inside.

For each $\phi\in \mathcal{C}_{\mathbb{C}}$ and $w\in \rho(\mathcal{A})$,
there exists a function $\varphi\in \mathcal{C}_{\mathbb{C}}$ such that $R(\omega,\mathcal{A})\phi=\varphi$.
This yields $(\omega I-\mathcal{A})\varphi=\phi$. Since
$$
\phi=\overline{\phi}
=\overline{(\omega I-\mathcal{A})\varphi}
=\overline{\omega\varphi}-\overline{\mathcal{A}\varphi}
=(\overline{\omega} I-\mathcal{A})\overline{\varphi},
$$
we have
\begin{eqnarray*}
R(\overline{\omega},\mathcal{A})\phi=\overline{\varphi}=\overline{R(\omega,\mathcal{A})\phi}.
\end{eqnarray*}
This yields
\[
P_{\bar \lambda} \phi= \frac{1}{2\pi {\bf i}}\int_{\bar \Gamma_\lambda} R(\omega,\mathcal{A})\phi\, d \omega
   =\overline{\frac{1}{2\pi {\bf i}}\int_{\Gamma_\lambda} R(\omega,\mathcal{A})\phi\, d \omega}=\overline{P_{\lambda} \phi},
\]
which implies  (iv). Therefore the proof is now completed.
\end{proof}

Recall that the dichotomous spectrum set $\Sigma$ and the dichotomous resolvent set $\Omega$  are defined by \eqref{df:Sigma-Omega}.
In the following, we prove Theorem \ref{thm:1}.
\\

{\bf Proof of Theorem \ref{thm:1}.}
We first prove that for each $\mu\in \Omega$ (see \eqref{df:Sigma-Omega}),
there exist real constants $\alpha_{\mu}$, $\beta_{\mu}$ with $\beta_{\mu}<\mu<\alpha_{\mu}$
and $K_{\mu}\geq 1$ such that the solution semigroup $\{T(t)\,:\,t\geq0\}$ on $\mathcal{C}_{\mathbb{C}}$ admits an exponential dichotomy
$\mathcal{E}(\beta_{\mu}, \alpha_{\mu}; K_{\mu})$.
Fix a constant $\mu\in \Omega$. Substituting  $x(t)=y(t)e^{\mu t}$ in Eq. \eqref{eq:NE} yields
\begin{eqnarray}\label{eq:auxillary}
y'(t)+ce^{-\mu}y'(t-1)+(a+\mu)y(t)+(b+c\mu)e^{-\mu}y(t-1)=0,
\end{eqnarray}
which has the characteristic equation $h(\lambda+\mu)=0$, i.e.,
\begin{eqnarray*}
\lambda(1+ce^{-(\lambda+\mu)})+(a+\mu)+(b+c\mu)e^{-\lambda+\mu}=0.
\end{eqnarray*}
Recall that $\Omega$ is defined in  \eqref{df:Sigma-Omega} and $\ln |c|\notin \Omega$.
Then there exists a small $\varepsilon(\mu)>0$ such that
\begin{eqnarray}\label{ineq:ED-cond-1}
1+ce^{-(\lambda+\mu)}\neq 0,\ \ \ h(\lambda+\mu)\neq 0
\end{eqnarray}
for $-\varepsilon(\mu)<{\rm Re}\lambda<\varepsilon(\mu)$.
By  Theorem 4.2 in \cite[p.278]{JKHale-Verduyn},
Eq. \eqref{eq:auxillary} admits an exponential dichotomy
$\mathcal{E}(-\varepsilon(\mu)/2,\varepsilon(\mu)/2;K_{\mu})$ for some constant $K_{\mu}\geq 1$.
This implies that Eq. \eqref{eq:NE} admits an exponential dichotomy $\mathcal{E}(\mu-\varepsilon(\mu)/2,\mu+\varepsilon(\mu)/2;K_{\mu})$.

Next, we prove that for each $\mu\in \Omega$
the solution semigroup $\{T(t)\,:\,t\geq0\}$ on $C[-1,0]$ admits an exponential dichotomy.
For the dichotomy $\mathcal{E}(\mu-\varepsilon(\mu)/2,\mu+\varepsilon(\mu)/2;K_{\mu})$,
let $P^{+}_{\mu}$ and $P^{-}_{\mu}$ denote the corresponding unstable and stable projections.
Recall that $\ln |c|$ is the unique accumulation point of  $\Sigma$.
Then Eq. \eqref{eq:character} has finitely many roots $\lambda$s such that either ${\rm Re}\lambda\leq \mu$ or ${\rm Re}\lambda\geq \mu$.
Without loss of generality,
assume that  Eq. \eqref{eq:character} has finitely many roots $\lambda=\omega_{1}$, $\omega_{2}$, ..., $\omega_{n_{\nu}}$
such that ${\rm Re}\lambda\geq \mu$.
It follows from \cite[Lemma 2.1, p.263]{JKHale-Verduyn} that
\begin{eqnarray}\label{eq:proj}
P^{+}_{\mu}\phi =\sum_{i=1}^{n_{\nu}}P_{\omega_{i}} \phi,\ \ \ \ \ \forall\, \phi \in \mathcal{C}_{\mathbb{C}},
\end{eqnarray}
where $P_{\omega_{i}}$ is the spectral projection of the eigenvalue $\omega_{i}$ for each $i$ (see \eqref{prj}).
This implies that different connected intervals of $\Omega$ determine different dichotomies.
Using \eqref{eq:proj} and (iv) of Lemma \ref{lm:A-spectrum}, we further have
\[
P^{+}_{\mu}\,:\, C[-1,0] \to C[-1,0].
\]
Thus $\{T(t)\,:\,t\geq0\}$ on the space $C[-1,0]$ admits an exponential dichotomy $\mathcal{E}(\mu-\varepsilon(\mu)/2,\mu+\varepsilon(\mu)/2;K_{\mu})$.

Finally, we count exponential dichotomies of Eq. \eqref{eq:NE}.
If \eqref{abc} holds, then $|a+\ln |c||\neq |\ln |c|+b/|c||$.
Using Lemmas \ref{lm:conf-c-positve} and \ref{lm:conf-c-negative} yields
that $\Omega$ contains countably infinite many connected intervals.
Then Eq. \eqref{eq:NE} admits countably infinite many exponential dichotomies.
Otherwise, if $|a+\ln |c||=|\ln |c|+b/|c||$,
then by (v) of Lemma \ref{lm:conf-c-positve} and (v) of Lemma \ref{lm:conf-c-negative},
we have that $\Omega$ contains at most three connected intervals.
Then by the preceding discussion, Eq. \eqref{eq:NE} admits at most three exponential dichotomies.
Therefore, the proof is completed.
\hfill $\Box$


\section{Non-uniformity of sequential dichotomies for the solution semigroup}
\label{sec:nonuniformity}

In this section,
we consider the uniformity of sequential dichotomies.
When Eq. \eqref{eq:NE} admits countably infinite many exponential dichotomies,
we will prove that  this sequence of dichotomies has no uniform bounds, i.e.,
it is sequentially non-uniform.

\subsection{An auxiliary equation}\

We first consider an auxiliary equation
\begin{eqnarray}\label{eq:NE-auxi}
x'(t)+cx'(t-1)=0, \ \ \ \ \  x\in\mathbb{R},
\end{eqnarray}
which has the characteristic equation
\begin{eqnarray}\label{eq:h-0}
h_{0}(\lambda):=\lambda(1+ ce^{-\lambda})=0.
\end{eqnarray}
Let $\mathcal{B}$ denote the infinitesimal generator of the solution semigroup of Eq. \eqref{eq:NE-auxi}.
A direct computation yields that all eigenvalues of $\mathcal{B}$ are given by
\begin{eqnarray*}
z^0_0:=0, \ \ \ z_{n}:=\ln |c|+{\bf i}(2n\pi+\arg (-c)), \ \ \ n\in \mathbb{Z}.
\end{eqnarray*}
If $c=-1$, then $z^0_0=0$ is the unique root of multiplicity 2.
Otherwise, all roots of Eq. \eqref{eq:h-0} are simple.

The eigenvalues $z_{n}$ ($n\in\mathbb{Z}$) of $\mathcal{B}$ are closely related to an orthogonal system in $L^{2}[-1,0]$,
i.e.,
the set of square-integrable functions on $[-1,0]$ with the $L^{2}$-norm.
We also refer to \cite{Olevskii} for more information on orthogonal systems.
In fact, we have the following results.

\begin{lemma}\label{lm:orth}
Define $\{\chi_{n}\}_{n\in\mathbb{Z}}$ by
\[
\chi_{n}(\theta):=e^{{\bf i}2n\pi\theta},\ \ \ \ \ \theta\in [-1,0],\ \ n\in \mathbb{Z}.
\]
Then $\{\chi_{n}\}_{n\in\mathbb{Z}}$ is an orthogonal system in $L^{2}[-1,0]$,
i.e.,
\begin{eqnarray*}
\int^{0}_{-1} \overline{\chi}_{n}(\theta) \chi_m(\theta)\,d\theta
=\left\{
\begin{aligned}
0\ \ \  \mbox{ if } \ n\neq m,\\
1\ \ \ \mbox{ if } \ n=m,
\end{aligned}
\right.
\end{eqnarray*}
where $\overline{\chi}_{n}(\theta)$ is the conjugacy of $\chi_{n}(\theta)$.
Furthermore,  let
\[
\hat a_{n}(\phi)=\int_{-1}^{0} e^{z_{n}\theta}\phi(\theta)\,d\theta,
\ \ \ \phi \in \mathcal{C}_{\mathbb{C}}.
\]
Then there exists a constant $C_{\star}>0$ such that
\[
\sum_{n=-\infty}^{+\infty}|\hat a_{n}(\phi)|^{2}\leq C_{\star} \|\phi\|_{\infty}^{2}.
\]
\end{lemma}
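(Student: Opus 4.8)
The plan is to handle the two assertions separately, since the orthonormality statement is a direct computation and the summability bound is a Bessel-type estimate built on it. For orthonormality of $\{\chi_{n}\}_{n\in\mathbb{Z}}$ I would simply evaluate $\int_{-1}^{0}\overline{\chi}_{n}(\theta)\chi_{m}(\theta)\,d\theta=\int_{-1}^{0}e^{{\bf i}2(m-n)\pi\theta}\,d\theta$. When $n=m$ the integrand is identically $1$, so the integral equals the length of $[-1,0]$, namely $1$; when $n\neq m$ an antiderivative gives $(1-e^{-{\bf i}2(m-n)\pi})/({\bf i}2(m-n)\pi)=0$ because $e^{-{\bf i}2k\pi}=1$ for every integer $k$. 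This requires no further work.

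For the summability bound, the key observation is that the real part $\ln|c|$ of the eigenvalue $z_{n}=\ln|c|+{\bf i}(2n\pi+\arg(-c))$ factors out as an $n$-independent weight. Writing $w(\theta):=|c|^{\theta}e^{{\bf i}\arg(-c)\theta}$, I would record the identity $e^{z_{n}\theta}=w(\theta)\chi_{n}(\theta)$. First I would set $g:=w\phi\in L^{2}[-1,0]$ and use the relation $\chi_{n}=\overline{\chi}_{-n}$ to rewrite
\[
\hat a_{n}(\phi)=\int_{-1}^{0}\chi_{n}(\theta)g(\theta)\,d\theta=\int_{-1}^{0}\overline{\chi}_{-n}(\theta)g(\theta)\,d\theta,
\]
which exhibits $\hat a_{n}(\phi)$ as the $(-n)$-th Fourier coefficient of $g$ relative to the orthonormal system from the first part.

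Once this identification is in place, the series $\sum_{n}|\hat a_{n}(\phi)|^{2}$ equals $\sum_{n}|\langle g,\chi_{-n}\rangle|^{2}=\sum_{n}|\langle g,\chi_{n}\rangle|^{2}$, and Bessel's inequality for $\{\chi_{n}\}$ bounds it by $\|g\|_{L^{2}}^{2}$. It then remains to compare $\|g\|_{L^{2}}$ with $\|\phi\|_{\infty}$: since $|w(\theta)|^{2}=|c|^{2\theta}$ and $\theta\in[-1,0]$, the weight satisfies $|c|^{2\theta}\leq M$ with $M:=\max\{1,|c|^{-2}\}$, whence $\|g\|_{L^{2}}^{2}=\int_{-1}^{0}|c|^{2\theta}|\phi(\theta)|^{2}\,d\theta\leq M\|\phi\|_{\infty}^{2}$. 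Taking $C_{\star}:=M$ completes the argument.

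The estimate is essentially routine; the only genuine insight, and hence the step I would flag, is the factorization $e^{z_{n}\theta}=w(\theta)\chi_{n}(\theta)$, which converts the eigenfunctions of $\mathcal{B}$ into the orthonormal exponentials $\chi_{n}$ up to a single fixed bounded weight and thereby reduces the whole statement to classical Fourier theory. The minor care needed lies in bounding $|c|^{2\theta}$ uniformly on $[-1,0]$, where the two cases $|c|\geq 1$ and $|c|<1$ must both be covered; this is the only point at which the value of $c$ enters the constant.
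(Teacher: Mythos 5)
Your proposal is correct and follows essentially the same route as the paper: both factor $e^{z_{n}\theta}$ as $\chi_{n}(\theta)$ times the fixed weight $e^{(\ln|c|+{\bf i}\arg(-c))\theta}$, identify $\hat a_{n}(\phi)$ as Fourier coefficients of the weighted function, and bound the $L^{2}$ norm by $\max\{1,|c|^{-2}\}\|\phi\|_{\infty}^{2}$. The only cosmetic difference is that you invoke Bessel's inequality where the paper uses Parseval's theorem, and you track the index sign $n\mapsto -n$ explicitly; neither affects the result.
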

\begin{proof}
This first statement can be proved by a direct computation (see also \cite{Rudin-87}).
Note that
\[
\hat a_{n}(\phi)=\int_{-1}^{0} e^{z_{n}\theta}\phi(\theta)\,d\theta
                =\int_{-1}^{0} \chi_{n}(\theta) e^{(\ln |c|+{\bf i}\arg (-c))\theta}\phi(\theta)\,d\theta.
\]
Then $\hat a_{n}(\phi)$ are the Fourier coefficients  of function $e^{(\ln |c|+{\bf i}\arg (-c))\theta}\phi(\theta)$.
Since $\phi$ is in $\mathcal{C}_{\mathbb{C}} \subset L^{2}[-1,0]$,
using the Parseval theorem (see \cite[p.91]{Rudin-87}) yields
\[
\sum_{n=-\infty}^{+\infty}|\hat a_{n}(\phi)|^{2}
 =\int_{-1}^{0}|e^{(\ln |c|+{\bf i}\arg (-c))\theta}\phi(\theta)|^{2}\,d\theta
 \leq \|\phi\|_{\infty}^{2}\cdot\max\{1,|c|^{-2}\}.
\]
This completes the proof.
\end{proof}

By Lemma \ref{lm:A-spectrum},
the resolvent $R(\lambda,\mathcal{B})$, $\lambda\in \rho(\mathcal{B})$, is given by
\begin{eqnarray}\label{res:B}
R(\lambda,\mathcal{B})\phi(\theta)
= e^{\lambda\theta}\!\left\{(h_{0}(\lambda))^{-1}F_{0}(\lambda,\phi)+\int_{\theta}^{0}e^{-\lambda s} \phi(s)d s\right\},
\ \ \ \ \ \phi\in  \mathcal{C}_{\mathbb{C}},
\end{eqnarray}
where
\begin{eqnarray*}
F_{0}(\lambda,\phi):=
\phi(0)+ c\phi(-1)-c\lambda e^{-\lambda}\int^{0}_{-1}e^{-\lambda s}\phi(s)d s.
\end{eqnarray*}
Recall that $B_{n}(\varepsilon)$ are small open balls with center at $z_{n}$ and radius $\varepsilon$.
For a sufficiently small $\varepsilon$ and each $|n|>n_{\varepsilon}$,
no eigenvalues of $\mathcal{A}$ are on the boundaries $\partial B_{n}(\varepsilon)$ of $B_{n}(\varepsilon)$
(see (ii) of Lemma \ref{lm:eigen-distribution}).
In the following,
we consider the perturbation of $R(\lambda,\mathcal{A})$ from  $R(\lambda,\mathcal{B})$
along the boundaries $\partial B_{n}(\varepsilon)$ for  $|n|>n_{\varepsilon}$.

\begin{lemma}{\rm (Perturbation of the resolvent)}
\label{pert-resol}
For a sufficiently small $\varepsilon>0$ and  any $\lambda\in \partial B_{n}(\varepsilon)$ with $|n|>n_{\varepsilon}$,
there exists a constant $C_{\star}>0$ and  a sequence of positive constants $c_n$, $n\in \mathbb{Z}$, with
$\sum^{+\infty}_{n=-\infty} c_n^2 \leq C_{\star}$
such that
\begin{eqnarray*}
\|(R(\lambda,\mathcal{A})-R(\lambda,\mathcal{B}))\phi\|_{\infty}\leq C_{\star} (\frac{1}{n^{2}}+\frac{c_{n}}{n})
\end{eqnarray*}
 for  each $\phi\in \mathcal{C}_\mathbb{C}$ with $\|\phi\|_{\infty}\leq 1$.
Particularly, let
$
\mathcal{S}_{\mathbb{C}}=
\left\{\phi \in  \mathcal{C}_\mathbb{C}:{\rm support}(\phi)\subset [-1,0]\right\}.
$
Then
\begin{eqnarray*}
\|(R(\lambda,\mathcal{A})-R(\lambda,\mathcal{B}))\phi\|_{\infty}\leq C_{\star} \frac{c_{n}}{n}
\end{eqnarray*}
for each $\phi\in\mathcal{S}_\mathbb{C}$ with $\|\phi\|_{\infty}\leq 1$.
\end{lemma}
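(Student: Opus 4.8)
The plan is to subtract the two resolvent formulas of Lemma~\ref{lm:A-spectrum}(ii) and \eqref{res:B} and exploit the cancellation of their common integral part. Writing $I=I(\lambda):=\int_{-1}^{0}e^{-\lambda s}\phi(s)\,ds$ and recalling the identities $h(\lambda)=h_{0}(\lambda)+a+be^{-\lambda}$ and $F(\lambda,\phi)=F_{0}(\lambda,\phi)-be^{-\lambda}I$, the terms $e^{\lambda\theta}\int_{\theta}^{0}e^{-\lambda s}\phi(s)\,ds$ cancel, and clearing denominators with the cancellation $F h_{0}-F_{0}h=-be^{-\lambda}I\,h_{0}-F_{0}(a+be^{-\lambda})$ reduces the whole difference to two scalar quotients:
\[
(R(\lambda,\mathcal{A})-R(\lambda,\mathcal{B}))\phi(\theta)
=e^{\lambda\theta}\!\left(\frac{-\,be^{-\lambda}I}{h(\lambda)}-\frac{F_{0}(\lambda,\phi)\,(a+be^{-\lambda})}{h(\lambda)\,h_{0}(\lambda)}\right).
\]

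Next I would pin down the sizes of the denominators on $\partial B_{n}(\varepsilon)$. Since $z_{n}$ is a root of $1+ce^{-z}$, writing $\lambda=z_{n}+w$ with $|w|=\varepsilon$ gives $1+ce^{-\lambda}=1-e^{-w}$, so $|1+ce^{-\lambda}|\asymp\varepsilon$ uniformly for $|n|>n_{\varepsilon}$; combined with $|\lambda|\asymp|n|$ from Lemma~\ref{lm:eigen-distribution} this yields $|h_{0}(\lambda)|\asymp\varepsilon|n|$, and since $a+be^{-\lambda}=O(1)$ is dominated by $h_{0}$ for large $|n|$, also $|h(\lambda)|\asymp\varepsilon|n|$. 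The factors $e^{\lambda\theta}$, $e^{-\lambda}$ and $a+be^{-\lambda}$ are $O(1)$ because ${\rm Re}\,\lambda\to\ln|c|$. Substituting $F_{0}=(\phi(0)+c\phi(-1))-c\lambda e^{-\lambda}I$ then splits the second quotient into a boundary part with numerator $O(1)$ and denominator $\asymp\varepsilon^{2}n^{2}$, contributing the $1/n^{2}$ term, and an $I$-part whose numerator is $O(|n|\,|I|)$, contributing the $c_{n}/n$ term; the first quotient is $O(|I|/(\varepsilon|n|))$. Here $\varepsilon$ is fixed, so powers of $1/\varepsilon$ are absorbed into $C_{\star}$.

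The crux is therefore the estimate $|I(\lambda)|\le c_{n}$ for all $\lambda\in\partial B_{n}(\varepsilon)$, with a single $\ell^{2}$ sequence $\{c_{n}\}$ depending only on $\phi$ and not on the chosen point of the circle, and this is where Lemma~\ref{lm:orth} enters. Factoring $e^{-\lambda s}=e^{-{\bf i}2n\pi s}\,e^{-(\ln|c|+{\bf i}\arg(-c))s}\,e^{-ws}$, each $I(\lambda)$ is the $n$-th coefficient, in the orthonormal system $\{\chi_{n}\}$, of $e^{-ws}\Phi_{*}$, where $\Phi_{*}(s):=e^{-(\ln|c|+{\bf i}\arg(-c))s}\phi(s)$. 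Expanding $e^{-ws}=\sum_{k\ge0}(-w)^{k}s^{k}/k!$ gives $I(\lambda)=\sum_{k\ge0}\frac{(-w)^{k}}{k!}\widehat{(s^{k}\Phi_{*})}(n)$, and since $|s|\le1$ each sequence $\{\widehat{(s^{k}\Phi_{*})}(n)\}_{n}$ has $\ell^{2}$-norm at most $\|\Phi_{*}\|_{L^{2}}\le\max\{1,|c|^{-1}\}\,\|\phi\|_{\infty}$ by the Parseval/Bessel estimate of Lemma~\ref{lm:orth}. Setting $c_{n}:=\sup_{|w|=\varepsilon}|I(z_{n}+w)|\le\sum_{k\ge0}\frac{\varepsilon^{k}}{k!}\,|\widehat{(s^{k}\Phi_{*})}(n)|$ and applying Minkowski's inequality in $\ell^{2}(n)$ gives $\big(\sum_{n}c_{n}^{2}\big)^{1/2}\le e^{\varepsilon}\max\{1,|c|^{-1}\}\,\|\phi\|_{\infty}$, which is exactly the required summability and also yields $|I(\lambda)|\le c_{n}$. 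I expect this uniform $\ell^{2}$ control of the perturbed Fourier samples, handling the $n$-dependent weight $e^{-ws}$ along $\partial B_{n}(\varepsilon)$ rather than a single fixed function, to be the main obstacle; the Taylor-plus-Minkowski device is precisely what removes the naive $O(\varepsilon)$ error in $I$ that would otherwise spoil summability and leave an unwanted $O(1/|n|)$ term.

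Finally I would insert $|I(\lambda)|\le c_{n}$ into the two quotients, obtaining $\|(R(\lambda,\mathcal{A})-R(\lambda,\mathcal{B}))\phi\|_{\infty}\le C_{\star}(1/n^{2}+c_{n}/|n|)$ for every $\phi$ with $\|\phi\|_{\infty}\le1$. For $\phi\in\mathcal{S}_{\mathbb{C}}$ one has $\phi(0)=\phi(-1)=0$, so $F_{0}=-c\lambda e^{-\lambda}I$ carries no bounded part; the boundary contribution vanishes, the $1/n^{2}$ term disappears, and only the $I$-terms survive, giving the sharpened bound $C_{\star}\,c_{n}/|n|$.
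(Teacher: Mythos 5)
Your proposal follows essentially the same route as the paper: subtract the two resolvent formulas so the common integral term cancels, bound $h_{0}^{-1}$, $h^{-1}$ and their difference by $O(1/(\varepsilon|n|))$ and $O(1/|n|^{2})$ on $\partial B_{n}(\varepsilon)$, and control $I(\lambda)=\int_{-1}^{0}e^{-\lambda s}\phi(s)\,ds$ in $\ell^{2}$ via the Parseval bound of Lemma \ref{lm:orth}; your splitting into the two quotients is algebraically equivalent to the paper's decomposition $h_{0}^{-1}(F-F_{0})+(h^{-1}-h_{0}^{-1})F$, and the treatment of $\mathcal{S}_{\mathbb{C}}$ is identical. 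The one genuine difference is that you make the $\ell^{2}$ bound uniform over the circle by Taylor-expanding the weight $e^{-ws}$ and applying Minkowski's inequality in $\ell^{2}(n)$, whereas the paper simply treats $a_{n}(\phi)=\int_{-1}^{0}e^{-z_{n}s}\bigl(e^{-\varepsilon e^{{\bf i}\zeta}s}\phi(s)\bigr)\,ds$ as the Fourier coefficients of a single function even though $\zeta$ varies with $\lambda$ and hence with $n$ --- your extra step closes that small gap and supplies exactly the $c_{n}=\sup_{\lambda\in\partial B_{n}(\varepsilon)}|I(\lambda)|$ that the subsequent Lemma \ref{pert-prjs} actually needs.
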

\begin{proof}
Fix $\phi\in \mathcal{C}_\mathbb{C}$ such that $\|\phi\|_{\infty}\leq 1$
and $\lambda\in \partial B_{n}(\varepsilon)$ with $|n|>n_{\varepsilon}$.
By Lemmas \ref{lm:eigen-distribution} and \ref{lm:A-spectrum},
we have   $\lambda\in \rho(\mathcal{A})\cap\rho(\mathcal{B})$.
A direct computation yields
\begin{eqnarray}\label{resolvminus}
\begin{aligned}
&(R(\lambda,\mathcal{A})-R(\lambda,\mathcal{B}))\phi(\theta) \\
&= e^{\lambda\theta}\left(h^{-1}(\lambda)F(\lambda,\phi)- h^{-1}_{0}(\lambda)F_{0}(\lambda,\phi)\right)\\
&= e^{\lambda\theta}\left( h^{-1}_{0}(\lambda)(F(\lambda,\phi)-F_{0}(\lambda,\phi))+(h^{-1}(\lambda)- h^{-1}_{0}(\lambda))F(\lambda,\phi)\right).
\end{aligned}
\end{eqnarray}

To prove this lemma,
we first estimate $|h^{-1}_{0}(\lambda)|$ and $|h^{-1}(\lambda)- h^{-1}_{0}(\lambda)|$.
For each $\lambda\in \partial B_{n}(\varepsilon)$,
we can write $\lambda$ as
\[
\lambda=\ln |c|+{\bf i}(2n\pi+\arg (-c))+\varepsilon e^{{\bf i}\zeta}
\]
for some $\zeta\in [0,2\pi)$. This together with \eqref{eq:h-0} yields
$
h^{-1}_{0}(\lambda)=\lambda^{-1}(1-e^{-\varepsilon e^{{\bf i}\zeta}}).
$
It follows that there exists a small  $\varepsilon$ such that
\begin{eqnarray}\label{est:h-0}
|h^{-1}_{0}(\lambda)|\leq 3|\lambda|^{-1}.
\end{eqnarray}
Note that
$
h^{-1}(\lambda)=h^{-1}_{0}(\lambda)(1+h^{-1}_{0}(\lambda)(a+be^{-\lambda}))^{-1}.
$
Without loss of generality,
we can choose $n_{\varepsilon}>0$ such that
\[
|h^{-1}(\lambda)|\leq 6|\lambda|^{-1}.
\]
This together with \eqref{est:h-0} yields that
\begin{eqnarray}\label{diff:h-h0}
|h^{-1}(\lambda)-h^{-1}_{0}(\lambda)|=|h^{-1}(\lambda)h^{-1}_{0}(\lambda)(a+be^{-\lambda})|\leq C_{\star} |\lambda|^{-2}
\end{eqnarray}
for some constant $C_{\star}>0$.

Next, we estimate $|F(\lambda,\phi)-F_{0}(\lambda,\phi)|$.
Recall that $F(\lambda,\phi)$ is defined in (ii) of Lemma \ref{lm:A-spectrum}
and $F_{0}(\lambda,\phi)$ is defined below \eqref{res:B}.
We can compute
\[
|F(\lambda,\phi)-F_{0}(\lambda,\phi)|
  =|-be^{-\lambda}\int^{0}_{-1}e^{-\lambda s}\phi(s)d s|
  \leq C_{\star}|a_{n}(\phi)|
\]
for some $C_{\star}>0$, where
\[
a_{n}(\phi)=\int^{0}_{-1}e^{-\lambda s}\phi(s)d s=\int^{0}_{-1}e^{-z_{n} s}\cdot(e^{-\varepsilon e^{{\bf i}\zeta}s}\phi(s))d s.
\]
By Lemma \ref{lm:orth}, there exists a constant $C_{\star}$ such that
\[
\sum^{+\infty}_{n=-\infty} |a_n(\phi)|^2 \leq  C_{\star}.
\]

Finally, we estimate $|F(\lambda,\phi)|$.  Let
\[
G(\lambda,\phi):=
-(c+b\lambda^{-1})e^{-\lambda}\int^{0}_{-1}e^{-\lambda s}\phi(s)d s.
\]
Then we have
\[
F(\lambda,\phi)=\phi(0)+ c\phi(-1)+\lambda G(\lambda,\phi).
\]
Using Lemma \ref{lm:orth} again,
we see that there exists a constant $C_{\star}>0$ such that
\begin{eqnarray}\label{est:F-phi}
|G(\lambda,\phi)|\leq C_{\star}|a_{n}(\phi)|,
\ \ \
|F(\lambda,\phi)|\leq (1+c)+ C_{\star}|a_{n}(\phi)||\lambda|.
\end{eqnarray}
Thus the first part is proved by \eqref{resolvminus}\,-\,\eqref{est:F-phi} and Lemma \ref{lm:eigen-distribution}.

If $\phi\in\mathcal{S}_\mathbb{C}$, then we have
\[
F(\lambda,\phi)=\lambda G(\lambda,\phi).
\]
By the first equality in \eqref{est:F-phi}, we get
\[
|F(\lambda,\phi)|\leq C_{\star}|a_{n}(\phi)||\lambda|.
\]
This together with \eqref{resolvminus}\,-\,\eqref{diff:h-h0} and Lemma \ref{lm:eigen-distribution} yields the second part.
This completes the proof.
\end{proof}

Note that $z_{n}$, $n\in\mathbb{Z}$,
are simple eigenvalues of $\mathcal{B}$,
except for $z_{0}={\bf i}\pi$ when $c=-1$ (see \cite[Corollary 2.1, p.264]{JKHale-Verduyn}).
Then, by (iii) of Lemma \ref{lm:A-spectrum}, the corresponding spectral projections $Q_{z_{n}}$ ($|n|\geq1$) are of the form
\begin{equation}\label{df:Q-zn}
\begin{split}
Q_{z_{n}}\phi &:=\frac{1}{2\pi {\bf i}}\int_{\Gamma_{z_{n}}} R(\lambda,\mathcal{B})\phi\, d \lambda\\
              &=  \frac{1}{z_{n}}\left(\phi(0)+ c\phi(-1)+z_{n}\int^{0}_{-1}e^{-z_{n} s}\phi(s)d s\right)e^{z_{n}\theta},
              \ \ \ \phi\in  \mathcal{C}_{\mathbb{C}},
\end{split}
\end{equation}
where  each $\Gamma_{z_{n}}$ is a contour such that $z_{n}$ is the unique eigenvalue inside.
In the following,
we further consider the perturbation of $P_{\lambda_{n}}$ from $Q_{z_{n}}$.

\begin{lemma}{\rm (Perturbation of projections)}\label{pert-prjs}
Let $\varepsilon$, $n_{\varepsilon}$, $C_{\star}$, $c_{n}$ be defined as in Lemma \ref{pert-resol}.
Then for $|n|>n_{\varepsilon}$ and $\phi\in \mathcal{C}_\mathbb{C}$ with $\|\phi\|_{\infty}\leq 1$,
the spectral projections $P_{\lambda_{n}}$ and $Q_{z_{n}}$ satisfy
\begin{eqnarray*}
\|P_{\lambda_{n}}\phi-Q_{z_{n}}\phi\|_{\infty} \!\!\!&\leq&\!\!\! \varepsilon  C_{\star} (\frac{1}{n^{2}}+\frac{c_{n}}{n}),
\\
\|\sum_{|n|>n_{\varepsilon}}(P_{\lambda_{n}}\phi-Q_{z_{n}}\phi)\|_{\infty} \!\!\!&\leq&\!\!\!
    \frac{\varepsilon \pi^{2}}{3}C_{\star}+\frac{\varepsilon \pi}{\sqrt{3}}C_{\star}^{\frac{3}{2}}.
\end{eqnarray*}
Particularly, for $|n|>n_{\varepsilon}$ and $\phi\in \mathcal{S}_\mathbb{C}$ with $\|\phi\|_{\infty}\leq 1$,
\begin{eqnarray*}
\|P_{\lambda_{n}}\phi-Q_{z_{n}}\phi\|_{\infty} \!\!\!&\leq&\!\!\! \frac{\varepsilon C_{\star} c_{n}}{n},
\\
\|\sum_{|n|>n_{\varepsilon}}(P_{\lambda_{n}}\phi-Q_{z_{n}}\phi)\|_{\infty} \!\!\!&\leq&\!\!\! \frac{\varepsilon \pi}{\sqrt{3}}C_{\star}^{\frac{3}{2}}.
\end{eqnarray*}
\end{lemma}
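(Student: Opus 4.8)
The plan is to write both spectral projections as contour integrals over one and the same curve, reduce their difference to a single integral of the resolvent difference, and then simply feed in Lemma \ref{pert-resol}. First I would note that for $|n|>n_\varepsilon$ the ball $B_n(\varepsilon)$ encloses exactly one eigenvalue of $\mathcal{A}$, namely $\lambda_n$ by (ii) of Lemma \ref{lm:eigen-distribution}, and exactly one eigenvalue of $\mathcal{B}$, namely its centre $z_n$, while $\partial B_n(\varepsilon)$ meets the spectrum of neither generator. Hence, by Cauchy's theorem, the contours $\Gamma_{\lambda_n}$ and $\Gamma_{z_n}$ in \eqref{prj} and \eqref{df:Q-zn} may both be deformed to $\partial B_n(\varepsilon)$ without altering the projections, giving
\begin{equation*}
(P_{\lambda_n}-Q_{z_n})\phi
=\frac{1}{2\pi{\bf i}}\int_{\partial B_n(\varepsilon)}\bigl(R(\lambda,\mathcal{A})-R(\lambda,\mathcal{B})\bigr)\phi\,d\lambda .
\end{equation*}

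The pointwise bounds then drop out immediately. Since $\partial B_n(\varepsilon)$ is a circle of radius $\varepsilon$, hence of length $2\pi\varepsilon$, the standard estimate of a contour integral by its length times the supremum of the integrand gives
\begin{equation*}
\|(P_{\lambda_n}-Q_{z_n})\phi\|_\infty
\leq \varepsilon\,\sup_{\lambda\in\partial B_n(\varepsilon)}\bigl\|\bigl(R(\lambda,\mathcal{A})-R(\lambda,\mathcal{B})\bigr)\phi\bigr\|_\infty .
\end{equation*}
Inserting the first inequality of Lemma \ref{pert-resol}, valid for $\phi\in\mathcal{C}_{\mathbb{C}}$, produces the bound $\varepsilon C_\star(n^{-2}+c_n/n)$, while inserting the sharper $\mathcal{S}_{\mathbb{C}}$ inequality produces $\varepsilon C_\star c_n/n$; these are exactly the two stated pointwise estimates.

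For the series estimates I would apply the triangle inequality in $C[-1,0]$ and sum the pointwise bounds over $|n|>n_\varepsilon$, then split the outcome into its two natural pieces. The $n^{-2}$ piece is handled by the Basel sum $\sum_{n\neq 0}n^{-2}=2\sum_{n\geq 1}n^{-2}=\pi^2/3$, which accounts for the term $\tfrac{\varepsilon\pi^2}{3}C_\star$. For the $c_n/n$ piece the decisive device is Cauchy--Schwarz,
\begin{equation*}
\sum_{|n|>n_\varepsilon}\frac{c_n}{n}
\leq\Bigl(\sum_{|n|>n_\varepsilon}c_n^2\Bigr)^{1/2}\Bigl(\sum_{|n|>n_\varepsilon}\frac{1}{n^2}\Bigr)^{1/2}
\leq C_\star^{1/2}\cdot\frac{\pi}{\sqrt{3}},
\end{equation*}
where I use $\sum_n c_n^2\leq C_\star$ from Lemma \ref{pert-resol} and the same Basel bound; this yields $\tfrac{\varepsilon\pi}{\sqrt{3}}C_\star^{3/2}$. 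For $\phi\in\mathcal{S}_{\mathbb{C}}$ only the $c_n/n$ piece survives, so the Cauchy--Schwarz step alone gives the claimed $\tfrac{\varepsilon\pi}{\sqrt{3}}C_\star^{3/2}$.

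Conceptually the lemma is pure bookkeeping on top of Lemma \ref{pert-resol}, so I do not expect a genuine obstacle; the analytic work was already spent on the resolvent perturbation. The two places that need attention are the contour-deformation step, which relies entirely on the eigenvalue separation of Lemma \ref{lm:eigen-distribution}(ii) to guarantee a single enclosed eigenvalue and a spectrum-free boundary, and the Cauchy--Schwarz trick, which is what upgrades the merely $\ell^2$-summable sequence $\{c_n\}$ into a convergent series $\sum c_n/n$. I would also keep in mind that $C_\star$ functions as a running constant absorbing all absolute factors, so that the explicit numerical constants $\pi^2/3$ and $\pi/\sqrt{3}$ in the final bounds originate solely from the Basel sum and from the length $2\pi\varepsilon$ of the integration contour.
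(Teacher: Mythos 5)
Your proposal is correct and follows essentially the same route as the paper: both represent $P_{\lambda_n}-Q_{z_n}$ as a contour integral of the resolvent difference over $\partial B_n(\varepsilon)$, bound it by $\varepsilon$ times the supremum from Lemma \ref{pert-resol}, and then sum using the Basel value $\sum_{|n|\geq 1}n^{-2}=\pi^2/3$ together with Cauchy--Schwarz on the $c_n/n$ terms. The numerical constants you obtain match the paper's exactly.
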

\begin{proof}
Fix $\phi\in \mathcal{S}_\mathbb{C}$ with $\|\phi\|_{\infty}\leq 1$.
Recall that for $|n|>n_{\varepsilon}$
the generator  $\mathcal{A}$ (resp. $\mathcal{B}$) has the unique eigenvalue $\lambda_{n}$ (resp. $z_{n}$) in $B_{n}(\varepsilon)$.
Note that
\begin{eqnarray*}
P_{\lambda_{n}}\phi \!\!\!&=&\!\!\! \frac{1}{2\pi {\bf i}}\int_{\partial B_{n}(\varepsilon)} R(\lambda,\mathcal{A})\phi\, d \omega,
\\
Q_{z_{n}}\phi \!\!\!&=&\!\!\! \frac{1}{2\pi {\bf i}}\int_{\partial B_{n}(\varepsilon)} R(\lambda,\mathcal{B})\phi\, d \omega.
\end{eqnarray*}
Then by Lemma \ref{pert-resol} there exists a constant $C_{\star}>0$ such that
\begin{eqnarray*}
\|P_{\lambda_{n}}\phi-Q_{z_{n}}\phi\|_{\infty}
   \!\!\!&=&\!\!\!   \|\frac{1}{2\pi {\bf i}}\int_{\partial B_{n}(\varepsilon)} (R(\lambda,\mathcal{A})-R(\lambda,\mathcal{B}))\phi\, d \omega\|_{\infty}\\
   \!\!\!&\leq &\!\!\!  \varepsilon \sup_{\lambda\in \partial B_{n}(\varepsilon)}\|(R(\lambda,\mathcal{A})-R(\lambda,\mathcal{B}))\phi\|_{\infty}\\
   \!\!\!&\leq &\!\!\!  \varepsilon  C_{\star} (\frac{1}{n^{2}}+\frac{c_{n}}{n}),
\end{eqnarray*}
where $c_{n}$\,s
are defined as in Lemma \ref{pert-resol} and satisfy $\sum^{+\infty}_{n=-\infty} c_n^2 \leq C_{\star}$.
It follows that
\begin{eqnarray*}
\|\sum_{|n|>n_{\varepsilon}}(P_{\lambda_{n}}\phi-Q_{z_{n}}\phi)\|_{\infty}
   \!\!\!&\leq &\!\!\! \varepsilon C_{\star}  \sum_{|n|>n_{\varepsilon}}(\frac{1}{n^{2}}+\frac{c_{n}}{n})\\
   \!\!\!&\leq &\!\!\! \varepsilon C_{\star}\left\{  \sum_{|n|\geq1}\frac{1}{n^{2}}+ \sqrt{\sum^{+\infty}_{n=-\infty} c_n^2}\cdot \sqrt{\sum_{|n|\geq1}\frac{1}{n^{2}}}\right\}\\
   \!\!\!&\leq &\!\!\! \frac{\varepsilon \pi^{2}}{3}C_{\star}+\frac{\varepsilon \pi}{\sqrt{3}}C_{\star}^{\frac{3}{2}},
\end{eqnarray*}
where we use the Cauchy-Schwarz Inequality and $\sum_{n=1}^{+\infty}1/n^2=\pi^{2}/6$.
This proves the first part.
By Lemma \ref{pert-resol}, we can get the second one similarly.
Therefore the proof is now completed.
\end{proof}

In the end of this subsection,
we consider a convergence problem on the partial sums of the spectral projections $Q_{z}$, $z\in\sigma(\mathcal{B})$.
More precisely,
consider a family of subsets of $\sigma(\mathcal{B})$, denoted by $\{\Omega_{\ell}\}_{\ell=1}^{\infty}$,
which satisfies the following properties:

\begin{enumerate}
\item[(\bf P)]
$\mathcal{N}_{\ell}:=\#\Omega_{\ell}<+\infty$ for $\ell\geq 1$,
$\Omega_{1}\subsetneq \Omega_{2}\subsetneq \cdots \Omega_{\ell}\subsetneq \cdots$,
and $\bigcup_{\ell=1}^{\infty}\Omega_{\ell}\subset\sigma(\mathcal{B})$.
\end{enumerate}
Here we use $\#$ to denote a function counting the elements in a set.
Define the partial sums of the spectral projections $Q_{z}$, $z\in\Omega_{\ell}$, as the form
\begin{eqnarray}\label{df:T-ell}
\mathcal{T}_{\ell}=\sum_{z\in\Omega_{\ell}}Q_{z}.
\end{eqnarray}
Then $\mathcal{T}_{\ell}$, $\ell\geq 1$, define a family of bounded linear operators from $\mathcal{C}_{\mathbb{C}}$ to itself.
The corresponding norms of $\mathcal{T}_{\ell}$ are denoted by $\|\mathcal{T}_{\ell}\|$.
Then we have the following lemma.

\begin{lemma}\label{lm:Fseries-nonuniform}
Suppose that $\mathcal{N}_{\ell}$, the number of elements in $\Omega_{\ell}$,  satisfiess
\begin{eqnarray}\label{eq:lim-N-m}
\lim_{\ell\to +\infty}\frac{\mathcal{N}_{\ell+1}}{\mathcal{N}_{\ell}}=1.
\end{eqnarray}
Then there exist a function $\phi_{*}\in \mathcal{S}_{\mathbb{C}}$
and a positive measure set $\mathcal{U}\subset (-1,0)$  in the sense of Lebesgue measure such that
\[
\lim_{\ell\to +\infty} |\mathcal{T}_{\ell}\phi_{*}(\theta)| =+\infty,
\ \ \ \ \ \forall\,\theta\in\mathcal{U}.
\]
Furthermore, the partial sums $\mathcal{T}_{\ell}$, $\ell\geq 1$, satisfy
\[
\sup_{\ell\geq 1}\|\mathcal{T}_{\ell}\|=+\infty.
\]
\end{lemma}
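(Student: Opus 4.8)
The plan is to identify $\mathcal{T}_{\ell}$, restricted to the subspace $\mathcal{S}_{\mathbb{C}}$, with a partial-sum operator for the orthogonal system $\{\chi_{n}\}$ of Lemma~\ref{lm:orth}, and then to read off both assertions from the behaviour of Fourier partial sums over a general frequency set. First I would evaluate $\mathcal{T}_{\ell}$ on $\mathcal{S}_{\mathbb{C}}$: since $\phi(0)=\phi(-1)=0$ there, the boundary terms in \eqref{df:Q-zn} drop out and $Q_{z_{n}}\phi(\theta)=\big(\int_{-1}^{0}e^{-z_{n}s}\phi(s)\,ds\big)e^{z_{n}\theta}$. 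Writing $z_{n}=\ln|c|+{\bf i}(2n\pi+\arg(-c))$ and factoring $e^{z_{n}\theta}=e^{(\ln|c|+{\bf i}\arg(-c))\theta}\chi_{n}(\theta)$, I obtain
\[
\mathcal{T}_{\ell}\phi(\theta)=e^{(\ln|c|+{\bf i}\arg(-c))\theta}\sum_{n\in I_{\ell}}\langle g,\chi_{n}\rangle\,\chi_{n}(\theta),\qquad I_{\ell}:=\{n:z_{n}\in\Omega_{\ell}\},
\]
where $g(s):=e^{-(\ln|c|+{\bf i}\arg(-c))s}\phi(s)$ and $\langle g,\chi_{n}\rangle=\int_{-1}^{0}g(s)\overline{\chi_{n}(s)}\,ds$ are exactly the Fourier coefficients controlled by the Parseval bound of Lemma~\ref{lm:orth} (the single extra term from the eigenvalue $0\in\sigma(\mathcal{B})$ is bounded and harmless). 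Because $\Omega_{1}\subsetneq\Omega_{2}\subsetneq\cdots$, the sets $I_{\ell}$ are nested with $\#I_{\ell}=\mathcal{N}_{\ell}\to+\infty$, and since the prefactor is bounded above and below on $[-1,0]$, $|\mathcal{T}_{\ell}\phi(\theta)|$ is comparable to $|S_{I_{\ell}}g(\theta)|$, a partial Fourier sum of $g$.

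For the assertion $\sup_{\ell\ge1}\|\mathcal{T}_{\ell}\|=+\infty$ I would use the kernel of $\mathcal{T}_{\ell}|_{\mathcal{S}_{\mathbb{C}}}$,
\[
K_{\ell}(\theta,s)=\sum_{n\in I_{\ell}}e^{z_{n}(\theta-s)}=e^{(\ln|c|+{\bf i}\arg(-c))(\theta-s)}D_{I_{\ell}}(\theta-s),\qquad D_{I_{\ell}}(u):=\sum_{n\in I_{\ell}}e^{{\bf i}2n\pi u}.
\]
Testing $\mathcal{T}_{\ell}$ against continuous functions supported in $(-1,0)$ that approximate $\mathrm{sgn}\,\overline{K_{\ell}(\theta_{0},\cdot)}$ yields $\|\mathcal{T}_{\ell}\|\ge c\sup_{\theta_{0}}\int_{-1}^{0}|D_{I_{\ell}}(\theta_{0}-s)|\,ds$, which by the $1$-periodicity of $D_{I_{\ell}}$ equals $c\|D_{I_{\ell}}\|_{L^{1}}$, the Lebesgue constant of the frequency set $I_{\ell}$. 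A classical lower bound for the $L^{1}$-norm of an idempotent exponential sum then gives $\|D_{I_{\ell}}\|_{L^{1}}\ge c_{0}\log\#I_{\ell}=c_{0}\log\mathcal{N}_{\ell}$, so $\|\mathcal{T}_{\ell}\|\to+\infty$ as $\mathcal{N}_{\ell}\to+\infty$.

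For the pointwise statement I would argue by condensation of singularities adapted to the prescribed scales $I_{\ell}$. The pointwise Lebesgue functions $\Lambda_{\ell}(\theta):=\int_{-1}^{0}|K_{\ell}(\theta,s)|\,ds$ are, by the periodicity of $D_{I_{\ell}}$ and the two-sided bound on the prefactor, comparable to $\|D_{I_{\ell}}\|_{L^{1}}$ uniformly in $\theta$, so each evaluation functional $\phi\mapsto\mathcal{T}_{\ell}\phi(\theta)$ on $\mathcal{S}_{\mathbb{C}}$ has norm tending to $+\infty$. Along a sparse subsequence of scales I would insert, at step $k$, a trigonometric block of small sup-norm $\varepsilon_{k}$ whose frequencies straddle the boundary of $I_{\ell_{k}}$, chosen so that the truncation $S_{I_{\ell_{k}}}$ reproduces a Dirichlet-type sum of amplitude $\varepsilon_{k}\log\mathcal{N}_{\ell_{k}}\to+\infty$ on a subset of $(-1,0)$ of measure close to $1$; summing these blocks, with pairwise disjoint frequency supports and $\sum_{k}\varepsilon_{k}<\infty$, produces a single $\phi_{*}\in\mathcal{S}_{\mathbb{C}}$. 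The hypothesis \eqref{eq:lim-N-m} enters precisely here, guaranteeing that the prescribed cardinalities $\mathcal{N}_{\ell}$ sweep through the intermediate values at which such boundary-straddling blocks can be placed. Combining the resulting families of large sets then delivers a positive-measure $\mathcal{U}\subset(-1,0)$ on which $|\mathcal{T}_{\ell}\phi_{*}(\theta)|$ blows up.

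The main obstacle is this last step: converting the uniform growth of the Lebesgue functions into genuine pointwise blow-up for a single fixed $\phi_{*}$ on a set of positive measure. Soft functional analysis will not suffice, because for $g\in L^{2}$ the nested partial sums $S_{I_{\ell}}g$ are Cauchy in $L^{2}$ and hence perfectly well behaved in the mean; the divergence must be manufactured at the level of the maximal function $\sup_{\ell}|\mathcal{T}_{\ell}\phi_{*}|$, and it is exactly here that the fine properties of the orthogonal system recorded in \cite{Olevskii}, together with the scale hypothesis \eqref{eq:lim-N-m}, have to be used quantitatively rather than qualitatively.
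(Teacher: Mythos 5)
Your reduction of $\mathcal{T}_{\ell}$ on $\mathcal{S}_{\mathbb{C}}$ to a partial Fourier-sum operator is exactly the paper's first step: the vanishing of $\phi$ at the endpoints kills the boundary terms in \eqref{df:Q-zn}, and after factoring out $e^{(\ln|c|+{\bf i}\arg(-c))\theta}$ one is left with $\sum_{n\in I_{\ell}}\langle \widetilde\phi,\chi_{n}\rangle\chi_{n}(\theta)$ for the orthonormal system of Lemma \ref{lm:orth} (the paper subtracts $\mathcal{T}_{\ell_{0}}$ to dispose of the possibly non-simple eigenvalue, matching your ``harmless extra term'' remark). Your route to $\sup_{\ell}\|\mathcal{T}_{\ell}\|=+\infty$ via Lebesgue constants is different from the paper's: you invoke the lower bound $\|\sum_{n\in I}e^{{\bf i}2n\pi u}\|_{L^{1}}\geq c_{0}\log\#I$ for an arbitrary frequency set, which is the McGehee--Pigno--Smith/Konyagin resolution of the Littlewood conjecture --- a correct but heavy import (for general nested $I_{\ell}$ one cannot get away with the elementary Dirichlet-kernel estimate). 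The paper instead obtains the norm statement for free from the pointwise statement via Banach--Steinhaus.

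The genuine gap is the pointwise statement itself, which is the actual content of the lemma and which your proposal does not prove. Your ``boundary-straddling blocks'' construction is left as a plan, and you explicitly concede that converting the growth of the Lebesgue functions into divergence of a \emph{single} $\phi_{*}$ on a \emph{positive-measure} set (with a genuine limit $+\infty$, not merely an unbounded maximal function) is unresolved; this is indeed the hard step, and it cannot be finessed --- for nested interval-type frequency sets Carleson--Hunt even forbids such behaviour for the full sequence of partial sums, so the construction must exploit that only the sparse family $\{I_{\ell}\}$ with $\mathcal{N}_{\ell+1}/\mathcal{N}_{\ell}\to1$ is prescribed. The paper closes exactly this gap by citing a ready-made quantitative condensation theorem, \cite[Theorem 5, p.16]{Olevskii}, which for a general orthonormal system and nested index sets satisfying \eqref{eq:lim-N-m} produces a bounded, compactly supported $\phi_{\star}$ and a positive-measure set $\mathcal{U}$ on which the partial sums tend to infinity; the function $\phi_{*}$ is then obtained by multiplying back the factor $e^{(\ln|c|+{\bf i}\arg(-c))\theta}$. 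Without that theorem (or a complete substitute for it), your argument establishes only the operator-norm blow-up, not the existence of $\phi_{*}$ and $\mathcal{U}$ asserted in the lemma.
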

\begin{proof}
Without loss of generality,
assume that there exists an integer $\ell_{0}$ such that $z^0_0=0\in \Omega_{\ell_{0}}$.
Then all eigenvalues in
\[
\Omega_{\ell}\setminus\Omega_{\ell_{0}}=:\left\{z_{\ell_{k}}\,:\,\mathcal{N}_{\ell_{0}}+1\leq k\leq \mathcal{N}_{\ell} \right\},
\ \ \ \ell>\ell_{0},
\]
are simple ones of the generator $\mathcal{B}$.
It follows from \eqref{df:Q-zn} that for each $\phi\in\mathcal{S}_{\mathbb{C}}$ and $\ell>\ell_{0}$,
\begin{equation}\label{eq:Tm-Tm0}
\begin{split}
(\mathcal{T}_{\ell}-\mathcal{T}_{\ell_{0}})\phi(\theta)
&= \sum_{k=\mathcal{N}_{\ell_{0}}+1}^{\mathcal{N}_{\ell}} Q_{z_{\ell_{k}}}\phi(\theta)\\
&= \sum_{k=\mathcal{N}_{\ell_{0}}+1}^{\mathcal{N}_{\ell}} e^{z_{\ell_{k}}\theta}\int^{0}_{-1}e^{-z_{\ell_{k}} s}\phi(s)d s.\\
&= \sum_{k=\mathcal{N}_{\ell_{0}}+1}^{\mathcal{N}_{\ell}}
   \left(\int^{0}_{-1} \overline{\chi}_{\ell_{k}}(s) \widetilde{\phi}(s) d s \right)\chi_{\ell_{k}}(\theta)   e^{(\ln |c|+{\bf i}\arg (-c))\theta},
\end{split}
\end{equation}
where
\[
\chi_{\ell_{k}}(\theta)=e^{{\bf i}2\ell_{k}\pi\theta},
\ \ \
\widetilde{\phi}(s)=e^{-(\ln |c|+{\bf i}\arg (-c))s}\phi(s).
\]
Recall  by Lemma \ref{lm:orth} that $\{\chi_{n}\}_{n\in\mathbb{Z}}$ is an orthogonal system in $L^{2}[-1,0]$.
Then by \eqref{eq:lim-N-m} and \cite[Theorem 5, p.16]{Olevskii},
there exists a function $\phi_{\star}\in \mathcal{S}_{\mathbb{C}}$
and a positive measure set $\mathcal{U}\subset (-1,0)$ such that
\begin{eqnarray}\label{lim-infty}
\lim_{\ell\to +\infty}|\sum_{k=\mathcal{N}_{\ell_{0}}+1}^{\mathcal{N}_{\ell}}
   \left(\int^{0}_{-1} \overline{\chi}_{\ell_{k}}(s) \phi_{\star}(s) d s \right)\chi_{\ell_{k}}(\theta)|=+\infty,
\ \ \ \ \ \forall\,\theta\in\mathcal{U}.
\end{eqnarray}
Define $\phi_{*}$ in $\mathcal{S}_{\mathbb{C}}$ by
\[
\phi_{*}(\theta)=e^{(\ln |c|+{\bf i}\arg (-c))\theta}\phi_{\star}(\theta),
\ \ \ \ \ \theta\in [-1,0].
\]
Then using  \eqref{eq:Tm-Tm0} and \eqref{lim-infty} yields that for each $\theta\in\mathcal{U}$,
\begin{eqnarray*}
&& \lim_{\ell\to +\infty}|(\mathcal{T}_{\ell}-\mathcal{T}_{\ell_{0}})\phi_{*}(\theta)|\\
&&\ \ \
   =\lim_{\ell\to +\infty} |\sum_{k=\mathcal{N}_{\ell_{0}}+1}^{\mathcal{N}_{\ell}}
   \left(\int^{0}_{-1} \overline{\chi}_{\ell_{k}}(s) \phi_{*}(s) d s \right)\chi_{\ell_{k}}(\theta)   e^{(\ln |c|)\theta}|
   =+\infty.
\end{eqnarray*}
This implies the first statement.
The second one follows from the Banach-Steinhaus Theorem \cite[p.98]{Rudin-87}.
Therefore, the proof is now completed.
\end{proof}

This lemma plays an important role in the subsequent proof.
It will be used to construct a function $\phi_{\star}$ in $C[-,0]$ such that
$\sup_{m\in\mathbb{N}}\|P_{m}^{\pm}\phi_{\star}\|= +\infty$.
This shows that the partial sums of the dichotomy projections acting on this function are divergent.


\subsection{Proof of Theorem \ref{thm:2}}\

In this subsection,
we further present some properties of the eigenvalues of $\mathcal{A}$,
i.e., the roots of Eq. \eqref{eq:character}.
They will be used to describe the changes of  the dimensions associated with sequential dichotomies.

For each $\delta>0$,
let  an integer-valued function $\Theta: (0,+\infty)\to \mathbb{N}$ be defined by
\begin{eqnarray}\label{df:Theta}
\Theta(\delta):=\#\left\{\lambda\in \sigma(\mathcal{A}):|{\rm Re}\lambda-\ln|c||> \delta\right\}.
\end{eqnarray}
By Lemmas \ref{lm:eigen-distribution}, \ref{lm:conf-c-positve} and \ref{lm:conf-c-negative},
$\Theta$ has the following properties:
\begin{enumerate}
\item[$\bullet$] $\Theta$ is right-continuous;
\vskip 3pt

\item[$\bullet$] $\Theta$ is nonincreasing and there exists a constant $\lambda_{*}$ such that  $\Theta(\delta)=0$ for all $\delta\geq \lambda_{*}$;
\vskip 3pt

\item[$\bullet$] $\Theta$ takes finitely many integer values if $|a+\ln |c||= |\ln |c|+\frac{b}{c}|$,
and $\Theta$ takes countably many integer values if $|a+\ln |c||\neq |\ln |c|+\frac{b}{c}|$.

\end{enumerate}
Here we  focus on the case $|a+\ln |c||\neq |\ln |c|+\frac{b}{c}|$.
Let  all values of the integer-valued function  $\Theta$  be denoted by $\mathcal{K}_{m}$, $m\in\mathbb{N}$, which satisifes
\begin{eqnarray}\label{eq:df-K-m}
0=\mathcal{K}_{0}<\mathcal{K}_{1}<\mathcal{K}_{2}<\cdots<\mathcal{K}_{m}<\cdots,
\end{eqnarray}
and $\lim_{m\to +\infty}\mathcal{K}_{m}= +\infty$.
In order to describe the decay rate of $\mathcal{K}_{m}$,
we give the next lemma.

\begin{lemma}\label{lm:eigen-strip}
For each real constant $x\neq \ln |c|$,
there is at most one pair of conjugate complex roots $\lambda$ and $\bar\lambda$  of Eq. \eqref{eq:character}
such that $\lambda$ lies on the line ${\rm Re}\lambda=x$ and ${\rm Im}\lambda\neq 0$.
\end{lemma}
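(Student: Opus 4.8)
The plan is to fix a real constant $x\neq\ln|c|$ and to show that the \emph{modulus} of the characteristic equation, evaluated along the vertical line ${\rm Re}\,\lambda=x$, pins down $({\rm Im}\,\lambda)^{2}$ to a single value. Any root $\lambda$ with ${\rm Im}\,\lambda\neq 0$ satisfies $\lambda\neq -a$, since $-a$ is real; hence by the equivalent form \eqref{h-equiv} the equation $h(\lambda)=0$ can be written as $e^{\lambda}=-(c\lambda+b)/(\lambda+a)$. Taking absolute values and using that $|e^{\lambda}|=e^{{\rm Re}\,\lambda}=e^{x}$ is constant on the line, I would set $\lambda=x+{\bf i}y$ and square both sides to obtain
\begin{equation*}
e^{2x}=\frac{(cx+b)^{2}+c^{2}y^{2}}{(x+a)^{2}+y^{2}}.
\end{equation*}

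Clearing the denominator and collecting the terms that contain $y^{2}$ turns this into a single equation that is linear in the unknown $y^{2}$,
\begin{equation*}
(e^{2x}-c^{2})\,y^{2}=(cx+b)^{2}-e^{2x}(x+a)^{2}.
\end{equation*}
The decisive observation is that the coefficient $e^{2x}-c^{2}$ is nonzero precisely when $x\neq\ln|c|$: because $c\neq 0$ we have $c^{2}=|c|^{2}>0$, and $e^{2x}=|c|^{2}$ holds if and only if $x=\ln|c|$. Thus, under the standing hypothesis $x\neq\ln|c|$, the quantity $y^{2}$ is uniquely determined.

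It then follows that at most two values of $y$ are admissible, namely $y=\pm\sqrt{\,\cdot\,}$ when the right-hand side above is positive, and none when it is nonpositive (the value $y=0$ being ruled out by ${\rm Im}\,\lambda\neq 0$). These two values form exactly one conjugate pair $\lambda=x+{\bf i}y$ and $\bar\lambda=x-{\bf i}y$, which is the conclusion of the lemma. The only delicate point—and the reason the line $x=\ln|c|$ must be excluded—is the nonvanishing of the leading coefficient $e^{2x}-c^{2}$; this is exactly why the accumulation point $\ln|c|$ of $\Sigma$ (see Lemma \ref{lm:eigen-distribution}) is singled out in the statement. I expect no further obstacle, since the modulus relation is only a \emph{necessary} condition for $\lambda$ to be a root, so imposing in addition the phase (argument) condition can only discard candidates and never create new ones; hence the bound of one conjugate pair persists.
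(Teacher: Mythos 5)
Your proof is correct, and it takes a genuinely different and noticeably shorter route than the paper's. The paper separates $h(x_0+\mathbf{i}y)=0$ into its real and imaginary parts, multiplies the two resulting equations to arrive at $\tan^2(y_j/2)=\varrho(x_0)$, and then runs through four sign cases, in each one recovering $\sin y_1=\sin y_2$ and $\cos y_1=\cos y_2$ from the half-angle formulas and deducing $y_1=y_2$ from the linear system \eqref{eq:contract-1}, where the hypothesis $x_0\neq\ln|c|$ enters via $(ce^{-x_0}\sin y_1)^2+(1+ce^{-x_0}\cos y_1)^2\geq(1-|c|e^{-x_0})^2>0$. You instead take the modulus of the equivalent form \eqref{h-equiv}, which immediately yields the relation
\begin{equation*}
(e^{2x}-c^{2})\,y^{2}=(cx+b)^{2}-e^{2x}(x+a)^{2},
\end{equation*}
linear in $y^2$ with leading coefficient nonzero exactly because $x\neq\ln|c|$ (here $c^2=|c|^2$ since $c$ is real); this pins down $y^2$ to a single value, hence $y$ to at most one pair $\pm y_0$. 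The two proofs use the hypothesis $x\neq\ln|c|$ in essentially the same way --- it is the statement $|ce^{-x}|\neq 1$ --- but your necessary-condition argument via $|e^{\lambda}|=e^{x}$ avoids the trigonometric manipulation and the casework entirely, and even gives an explicit formula for the admissible $y^2$. You also correctly handle the small points: $\lambda\neq -a$ because $-a$ is real and ${\rm Im}\,\lambda\neq 0$, and the modulus condition being merely necessary only strengthens the ``at most'' conclusion. No gap.
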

\begin{proof}
Suppose that there exists a real constant $x_{0}$ with $x_{0}\neq \ln |c|$ such that
Eq. \eqref{eq:character} has two  pairs of conjugate complex roots $x_{0}\pm {\bf i}y_{1}$ and $x_{0}\pm {\bf i}y_{2}$,
where $y_{1}>0, y_{2}>0$ with $y_{1}\neq y_{2}$.

Let $\lambda:=x+{\bf i}y$. Then  Eq. \eqref{eq:character} is equivalent to
\begin{eqnarray}
y(ce^{-x}\sin y)
   \!\!\!&=&\!\!\! -(a+be^{-x}\cos y)-x(1+ce^{-x}\cos y),\label{eq:x-1}\\
x(ce^{-x}\sin y)+be^{-x}\sin y
   \!\!\!&=&\!\!\! y(1+ce^{-x}\cos y).\label{eq:y-1}
\end{eqnarray}
Since  $x_{0}\neq \ln |c|$, we have $1-ce^{-x_{0}}\neq 0$.
This together with \eqref{eq:y-1} yields $y_{j}\notin \{2k\pi+\pi:\, k\in \mathbb{Z}\}$.
In fact, substituting $y=2k\pi+\pi$ in \eqref{eq:y-1},
we see that   the left-hand side of \eqref{eq:y-1} is zero and the right-hand side is nonzero.

Multiplying \eqref{eq:x-1} by \eqref{eq:y-1} yields
\begin{eqnarray*}
y_{j}\left\{((a + 2x_{0})c+b)e^{-x_{0}}\cos y_{j}+c(b+cx_{0})e^{-2x_{0}}+a+x_{0}\right\}=0.
\end{eqnarray*}
Since $y_{j}\neq 0$ for $j=1,2$, we have
\begin{eqnarray}\label{eq:tan-varrho}
(\tan(\frac{y_{j}}{2}))^{2}
=\frac{((a + 2x_{0})c + b)e^{-x_{0}}-c(cx_{0}+b)e^{-2x_{0}} - a - x_{0}}
       {((a + 2x_{0})c + b)e^{-x_{0}} + c(cx_{0} + b)e^{-2x_{0}} + a + x_{0}}=:\varrho(x_{0}).
\end{eqnarray}
Then  $y_{j}$ ($j=1,2$) satisfy
either $\tan(\frac{y_{j}}{2})=\sqrt{\varrho(x_{0})}$ or $\tan(\frac{y_{j}}{2})=-\sqrt{\varrho(x_{0})}$.

If $\tan(\frac{y_{1}}{2})=\tan(\frac{y_{2}}{2})=\sqrt{\varrho(x_{0})}$,
then
\begin{eqnarray*}
\sin y_{1}=\sin y_{2}=\frac{\sqrt{\varrho(x_{0})}}{1+\varrho(x_{0})},
\ \ \
\cos y_{1}=\cos y_{2}=\frac{1-\varrho(x_{0})}{1+\varrho(x_{0})}.
\end{eqnarray*}
By \eqref{eq:x-1} and \eqref{eq:y-1},
\begin{eqnarray}\label{eq:contract-1}
\begin{aligned}
(y_1-y_2) ce^{-x_{0}}\sin y_1  =& 0,\\
(y_1-y_2) (1+ce^{-x_{0}}\cos y_{1})=&0.
\end{aligned}
\end{eqnarray}
Since $x_{0}\neq \ln |c|$,  we can compute
\begin{eqnarray*}
(ce^{-x_{0}}\sin y_{1})^{2}+(1+ce^{-x_{0}}\cos y_{1})^{2}=c^{2}e^{-2x_{0}}+2ce^{-x_{0}}\cos y_{1}+1\geq (1-|c|e^{-x_{0}})^{2}>0.
\end{eqnarray*}
This together with \eqref{eq:contract-1} yields $y_{1}=y_{2}$, which is a contradiction.

If $\tan(\frac{y_{1}}{2})=-\tan(\frac{y_{2}}{2})=\sqrt{\varrho(x_{0})}$,
then $\tan(\frac{y_{1}}{2})=\tan(\frac{-y_{2}}{2})=\sqrt{\varrho(x_{0})}$.
Note that $(x,y)=(x_{0},y_{1})$ and $(x,y)=(x_{0},-y_{2})$ solve \eqref{eq:x-1} and \eqref{eq:y-1}, respectively.
Similarly to the preceding discussion, we have $y_{1}=-y_{2}$, which is a contradiction to the fact $y_{1}>0>-y_{2}$.

Similarly, we can obtain contradictions in the cases that
$-\tan(\frac{y_{1}}{2})=\tan(\frac{y_{2}}{2})=\sqrt{\varrho(x_{0})}$
and  $\tan(\frac{y_{1}}{2})=\tan(\frac{y_{2}}{2})=-\sqrt{\varrho(x_{0})}$.
Thus this lemma holds.
\end{proof}

We next give the following lemma to indicate the decay rate of $\mathcal{K}_{m}$.

\begin{lemma}\label{lm:lim-K-n}
If $|a+\ln |c||\neq |\ln |c|+\frac{b}{c}|$,
then
\[
\lim_{m\to +\infty}\frac{\mathcal{K}_{m+1}}{\mathcal{K}_{m}}=1.
\]
\end{lemma}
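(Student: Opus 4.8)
The plan is to read the size of each increment $\mathcal{K}_{m+1}-\mathcal{K}_{m}$ of the step function $\Theta$ directly off the root distribution, to show that these increments are bounded by a constant independent of $m$, and then to squeeze the ratio between $1$ and $1+C/\mathcal{K}_{m}$.

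First I would make precise the combinatorial meaning of the increments. Since $\Theta$ is a nonincreasing, right-continuous, integer-valued step function of $\delta$ (as recorded after \eqref{df:Theta}), each value $\mathcal{K}_{m+1}$ is attained when $\delta$ decreases past a threshold $t_{m}:=|{\rm Re}\lambda-\ln|c||$ for some eigenvalue $\lambda$, and the corresponding jump equals
\[
\mathcal{K}_{m+1}-\mathcal{K}_{m}=\#\left\{\lambda\in\sigma(\mathcal{A}):|{\rm Re}\lambda-\ln|c||=t_{m}\right\}.
\]
For a fixed distance $t_{m}>0$, the eigenvalues contributing to this count are exactly those lying on the two vertical lines ${\rm Re}\lambda=\ln|c|+t_{m}$ and ${\rm Re}\lambda=\ln|c|-t_{m}$.

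Next I would bound the number of such eigenvalues uniformly. By Lemma \ref{lm:eigen-strip}, each line ${\rm Re}\lambda=x$ with $x\neq\ln|c|$ carries at most one conjugate pair of nonreal roots, hence at most two nonreal eigenvalues; and by (iv) of Lemma \ref{lm:eigen-distribution} there are at most three real roots in total. Thus the two lines $\ln|c|\pm t_{m}$ together carry at most $2+2=4$ nonreal eigenvalues, while the at most three real roots can affect only finitely many thresholds and add at most three in total. Consequently there is a constant $C$ (one may take $C=7$) independent of $m$ with $\mathcal{K}_{m+1}-\mathcal{K}_{m}\leq C$ for every $m$.

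Finally, using $\lim_{m\to+\infty}\mathcal{K}_{m}=+\infty$ together with the strict monotonicity $\mathcal{K}_{m}<\mathcal{K}_{m+1}$ in \eqref{eq:df-K-m}, I would conclude
\[
1<\frac{\mathcal{K}_{m+1}}{\mathcal{K}_{m}}=1+\frac{\mathcal{K}_{m+1}-\mathcal{K}_{m}}{\mathcal{K}_{m}}\leq 1+\frac{C}{\mathcal{K}_{m}}\longrightarrow 1,
\]
so the squeeze yields $\lim_{m\to+\infty}\mathcal{K}_{m+1}/\mathcal{K}_{m}=1$. The main obstacle is the bookkeeping in the bounding step: one must correctly pair the two symmetric real-part lines $\ln|c|\pm t_{m}$ associated with a single distance $t_{m}$, apply Lemma \ref{lm:eigen-strip} to each, and separately dispose of the finitely many real roots and of any non-simple eigenvalues (which by (ii) of Lemma \ref{lm:eigen-distribution} occur only for $|\lambda|\leq n_{\varepsilon}$), so that the uniform bound on the jumps is genuinely independent of $m$.
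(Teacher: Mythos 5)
Your proposal is correct and follows essentially the same route as the paper: the authors likewise combine Lemma \ref{lm:eigen-strip} (at most one conjugate pair on each vertical line) with (iv) of Lemma \ref{lm:eigen-distribution} (at most three real roots) to obtain the uniform bound $\mathcal{K}_{m}+1\leq\mathcal{K}_{m+1}\leq\mathcal{K}_{m}+7$, and then conclude by the squeeze using $\mathcal{K}_{m}\to+\infty$. Your accounting of the two symmetric lines $\ln|c|\pm t_{m}$ and of the real roots just makes explicit the bookkeeping the paper leaves implicit.
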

\begin{proof}
By Lemmas \ref{lm:eigen-distribution} and \ref{lm:eigen-strip},
we have
\begin{eqnarray}\label{contr-gamma}
\mathcal{K}_{m}+1\leq \mathcal{K}_{m+1}\leq \mathcal{K}_{m}+7.
\end{eqnarray}
Recall that $\lim_{m\to +\infty}\mathcal{K}_{m}=+\infty$. Then
\[
\lim_{m\to +\infty}\frac{\mathcal{K}_{m}+1}{\mathcal{K}_{m}}=\lim_{m\to +\infty}\frac{\mathcal{K}_{m}+7}{\mathcal{K}_{m}}=1,
\]
which together with \eqref{contr-gamma} yields this lemma.
\end{proof}

Finally, we prove the second  theorem on the sequential dichotomies of Eq. \eqref{eq:NE}.

{\bf Proof of Theorem \ref{thm:2}.}
If $c\ne 0$ and (\ref{abc}) is true,
then by Lemmas \ref{lm:conf-c-positve} and \ref{lm:conf-c-negative}
the dichotomous resolvent set $\Omega$ (see \eqref{df:Sigma-Omega}) consists of countably infinite many connected intervals of $\lambda$
satisfying that ${\rm Re}\lambda<\ln |c|$ or ${\rm Re}\lambda>\ln |c|$.
We only prove the first case and the other ones can be discussed similarly.

Without loss of genrality,
let $\mathcal{E}(\beta_{m}, \alpha_{m}; K_{m})$, $m \in\mathbb{N}$,
denote all different dichotomies of Eq. \eqref{eq:NE} ordered according to
\[
\beta_{0}<\alpha_{0}<\beta_{1}<\alpha_{1}<\cdots<\beta_{m}<\alpha_{m}<\cdots.
\]
Then by  (i) and (iv) of Lemmas \ref{lm:conf-c-positve} and \ref{lm:conf-c-negative},
each stable projection $P_{m}^{-}$ is the sum of finitely many spectral projections and $P_{0}^{-}=0$.
In the following,
we give the explicit expressions of projections $P_{m}^{-}$ for $m \ge 1$.
Let
\[
\widetilde{\Omega}_{m}:=\{\lambda\in \sigma(\mathcal{A})\,:\, {\rm Re}\lambda< \beta_{m}\},
\ \ \ \ \
\widetilde{\mathcal{K}}_{m}:=\#\widetilde{\Omega}_{m},
\ \ \ \ \ m \geq 1.
\]
Recall that $\Theta$ is defined by \eqref{df:Theta}. By Lemma \ref{lm:App-1},
there is at most one eigenvalue $\lambda$ of $\mathcal{A}$ such that
${\rm Re}\lambda>\ln |c|$.
Then
\begin{eqnarray}\label{est:K-mathcalK}
\widetilde{\mathcal{K}}_{m}\leq \mathcal{K}_{m}\leq \widetilde{\mathcal{K}}_{m}+1,
\end{eqnarray}
where $\mathcal{K}_{m}$ are defined by \eqref{eq:df-K-m}.
Hence we can write  $\widetilde{\Omega}_{m}$ as
\[
\widetilde{\Omega}_{m}=\{\xi_{1},\xi_{2},...,\xi_{\widetilde{\mathcal{K}}_{m}}\},
\]
and $P_{m}^{-}\phi$ for $\phi\in\mathcal{C}_{\mathbb{C}}$ as
\[
P_{m}^{-}\phi=\sum_{i=1}^{\widetilde{\mathcal{K}}_{m}}P_{\xi_{i}}\phi,
\ \ \ \ \  m \geq 1,
\]
where each $P_{\xi_{i}}$  is the spectral projection of $\xi_{i}$.

To prove (i),
it suffices to prove $\sup_{m\geq 1}\|P_{m}^{-}\|=+\infty$ because $\|P_{m}^{-}+P_{m}^{+}\|=1$ for all $m\geq 1$.
We want to apply Lemma \ref{lm:Fseries-nonuniform} to complete it.
So we first construct a family of sets $\Omega_{\ell}$ such that  the properties in ({\bf P}) and \eqref{eq:lim-N-m} hold.
Let $\varepsilon$, $n_{\varepsilon}$ and $B_{n}(\varepsilon)$ be defined as in (ii) of Lemma  \ref{lm:eigen-distribution}.
By (ii) and (iv) of Lemma  \ref{lm:eigen-distribution}
there exists an integer $m_{0}$ such that for all $m\geq m_{0}$,
\[
(\widetilde{\Omega}_{m}\setminus \widetilde{\Omega}_{m_{0}})
  =\{\xi_{\widetilde{\mathcal{K}}_{m_{0}}+1},\xi_{\widetilde{\mathcal{K}}_{m_{0}}+2},...,\xi_{\widetilde{\mathcal{K}}_{m}}\} \subset \cup_{|n|>n_{\varepsilon}} B_{n}(\varepsilon).
\]
Let $\tilde{\xi}_{\widetilde{\mathcal{K}}_{m_{0}}+i}$ ($1\leq i\leq \widetilde{\mathcal{K}}_{m}-\widetilde{\mathcal{K}}_{m_{0}}$)
denote
the centers of those $B_{n}(\varepsilon)$s containing $\xi_{\widetilde{\mathcal{K}}_{m_{0}}+i}$,
and
\[
\Omega_{\ell}=\{\tilde{\xi}_{\widetilde{\mathcal{K}}_{m_{0}}+1},\tilde{\xi}_{\widetilde{\mathcal{K}}_{m_{0}}+2},...,\tilde{\xi}_{\widetilde{\mathcal{K}}_{m_{0}+\ell}}\},
\ \ \ \ \ \ell \geq 1.
\]
Since the eigenvalues of $\mathcal{B}$ are the centers of $B_{n}(\varepsilon)$\,s,
we see that $\Omega_{\ell}\subset \sigma(\mathcal{B})$ and $\{\Omega_{\ell}\}_{\ell=1}^{\infty}$ satisfies ({\bf P}).
We  still use $\mathcal{N}_{\ell}$ to denote $\#\Omega_{\ell}$.
By the fact that
$
\mathcal{N}_{\ell}=\widetilde{\mathcal{K}}_{m_{0}+\ell}-\widetilde{\mathcal{K}}_{m_{0}},
$
\eqref{est:K-mathcalK} and Lemma \ref{lm:lim-K-n},
\[
\lim_{\ell\to +\infty}\frac{\mathcal{N}_{\ell+1}}{\mathcal{N}_{\ell}}
=\lim_{\ell\to +\infty}\frac{\widetilde{\mathcal{K}}_{m_{0}+\ell+1}-\widetilde{\mathcal{K}}_{m_{0}}}{\widetilde{\mathcal{K}}_{m_{0}+\ell}-\widetilde{\mathcal{K}}_{m_{0}}}
=\lim_{\ell\to +\infty}\frac{\mathcal{K}_{\ell+1}}{\mathcal{K}_{\ell}}=1.
\]
Thus we get a family of desirable  sets $\Omega_{\ell}$.

Recall that $\mathcal{T}_{\ell}$ is defined by  \eqref{df:T-ell}.
By Lemma \ref{lm:Fseries-nonuniform},
there exists a function $\phi_{*}\in \mathcal{S}_{\mathbb{C}}$ and  $\mathcal{U}\subset (-1,0)$ such that
\[
\lim_{\ell\to +\infty} |\mathcal{T}_{\ell}\phi_{*}(\theta)| =+\infty,
\ \ \ \ \ \forall\,\theta\in\mathcal{U}.
\]
It follows that there is a point $\theta_{\star}\in \mathcal{U}$ such that
either $\phi_{\star}:={\rm Re}\phi_{*}/\|{\rm Re}\phi_*\|_{\infty}$ or $\phi_{\star}:={\rm Im}\phi_*/\|{\rm Im}\phi_{*}\|_{\infty}$
satisfies
\begin{eqnarray}\label{infty-est}
\lim_{\ell\to +\infty} |\mathcal{T}_{\ell}\phi_{\star}(\theta_{\star})| =+\infty.
\end{eqnarray}
Furthermore, this $\phi_{\star}$ is in the set $\mathcal{S}_{\mathbb{C}}\cap C[-1,0]$ and $\|\phi_{\star}\|_{\infty}=1$.
For each $m\geq m_{0}+1$, we write $P_{m}^{-}\phi_{\star}$ as
\begin{eqnarray}\label{eq:prj-split}
P_{m}^{-}\phi_{\star}=
           P_{m_{0}}^{-}\phi_{\star}
          +\sum_{i=\widetilde{\mathcal{K}}_{m_{0}}+1}^{\widetilde{\mathcal{K}}_{m}}(P_{\xi_{i}}\phi_{\star}-Q_{\tilde{\xi}_{i}}\phi_{\star})
          +\mathcal{T}_{\ell}\phi_{\star},
\end{eqnarray}
where  $Q_{\tilde{\xi}_{i}}$s
are defined by \eqref{df:Q-zn} and
\[
\mathcal{T}_{\ell}\phi_{\star}=\sum_{z\in\Omega_{\ell}}Q_{z}\phi_{\star}
                          =\sum_{i=\widetilde{\mathcal{K}}_{m_{0}}+1}^{\widetilde{\mathcal{K}}_{m_{0}+\ell}}Q_{\tilde{\xi}_{i}}\phi_{\star},
\ \ \ \ \
\ell=m-m_{0}.
\]
By (iv) of Lemma \ref{lm:A-spectrum} and the argument below \eqref{eq:proj},
each term in \eqref{eq:prj-split} belongs to $C[-1,0]$.
Note that
\[
P_{m_{0}}^{-}\phi_{\star}=\sum_{i=1}^{\widetilde{\mathcal{K}}_{m_{0}}}P_{\xi_{i}}\phi_{\star}
\]
is independent of $m$.
This together with  Lemma \ref{pert-prjs} yields that there exists a constant $C_{\#}>0$, independent of $m$, such that
\begin{eqnarray}\label{est:sum-prj-bdd}
\|P_{m_{0}}^{-}\phi_{\star}\|_{\infty}
+\|\sum_{i=\widetilde{\mathcal{K}}_{m_{0}}+1}^{\widetilde{\mathcal{K}}_{m}}(P_{\xi_{i}}\phi_{\star}-Q_{\tilde{\xi}_{i}}\phi_{\star})\|_{\infty}
\leq C_{\#}.
\end{eqnarray}
Applying \eqref{infty-est}  and \eqref{est:sum-prj-bdd} to \eqref{eq:prj-split}, we have that
\[
\lim_{m\to +\infty}|P_{m}^{-}\phi_{\star}(\theta_{\star})|=+\infty.
\]
Hence $\sup_{m\geq 1}\|P_{m}^{-}\|=+\infty$ by the Banach-Steinhaus Theorem (see \cite[p.98]{Rudin-87}).
By the definition of exponential dichotomies,
we have that $K_{m}\geq \|P_{m}^{-}\|$. This implies
\[
\sup_{m\geq 1}K_{m}\geq \sup_{m\geq 1}\|P_{m}^{-}\|=+\infty.
\]
Therefore the proof is now completed.
\hfill $\Box$

The present work finally settles two basic questions on Eq. \eqref{eq:NE},
i.e., the sequential uniformity and the convergence problem for its projections series.
It is worth mentioning that
neutral equations as semigroups on various Banach spaces, except for $C[-1,0]$ considered in this paper,
were once investigated in many references (see \cite{BDC-18,Hadd-08} for instance).
Using the orthonormal systems formed by the eigenfunctions of Eq. \eqref{eq:NE-auxi},
our argument is also applicable to discuss Eq. \eqref{eq:NE} on another  Banach spaces
such as $L^{2}[-1,0]$.


\bibliographystyle{amsplain}

\end{document}